\newtheorem{proposition}{Proposition}
\newtheorem{example}[proposition]{Example}
\newtheorem{lemma}[proposition]{Lemma}
\newtheorem{corollary}[proposition]{Corollary}
\newtheorem{theorem}[proposition]{Theorem}
\newcommand{\set}[1]{\left\{#1\right\}}
\newcommand{\II}{\mathds{I}}
\newcommand{\RRR}{\overline{\mathds{R}}}
\newcommand{\DD}{\mathcal{D}}
\begin{document}

\title[Sklar's Theorem]{A full scale Sklar's theorem in the imprecise setting}%
\author{Matja\v{z} Omladi\v{c}}%
\address{Institute of Mathematics, Physics and Mechanics, Ljubljana, Slovenia}%
\email{Matjaz@Omladic.net}%
\author{Nik Stopar}%
\address{Faculty of electrical engineering, University of Ljubljana, and Institute of Mathematics, Physics and Mechanics, Ljubljana, Slovenia}%
\email{Nik.Stopar@fe.uni-lj.si}%

\thanks{The authors acknowledge financial support from the Slovenian Research Agency (research core funding No. P1-0222).}
\subjclass{AMS}%
\keywords{Copula;Quasi-copula;Discrete copula;Imprecise copula; Defect;}%

%\date{}%
%\dedicatory{}%
%\commby{}%
% ----------------------------------------------------------------
\begin{abstract}
  In this paper we present a surprisingly general extension of the main result of a paper that appeared in this journal: I.\ Montes et al., \textsl{Sklar's theorem in an imprecise setting}, Fuzzy Sets and Systems, \textbf{278} (2015), 48--66. The main tools we develop in order to do so are: (1) a theory on quasi-distributions based on an idea presented in a paper by R.\ Nelsen with collaborators; (2) starting from what is called (bivariate) $p$-box in the above mentioned paper we propose some new techniques based on what we call restricted  (bivariate) $p$-box; and (3) a substantial extension of a theory on coherent imprecise copulas developed by M.\ Omladi\v{c} and N.\ Stopar in a previous paper in order to handle coherence of restricted (bivariate) $p$-boxes. A side result of ours of possibly even greater importance is the following: Every bivariate distribution whether obtained on a usual $\sigma$-additive probability space or on an additive space can be obtained as a copula of its margins meaning that its possible extraordinariness depends solely on its margins. This might indicate that copulas are a stronger probability concept than probability itself.
\end{abstract}
\maketitle
% ----------------------------------------------------------------

\section{Introduction}\label{sec:intro}

Dependence concepts play a crucial role in multivariate statistical literature since it was recognized that the independence assumption cannot describe conveniently the behavior of a stochastic system. One of the main tools in modeling these concepts have eventually become copulas due to their theoretical omnipotence emerging from the Sklar's theorem \cite{Skla} (see also the monographs \cite{AlFrSc,DuSe,Nels}). Namely, they are used to represent and construct joint distribution functions of random vectors in terms of the related one-dimensional marginal distribution functions. As a more general concept, quasi-copulas were introduced in \cite{AlNeSc} and an equivalent definition was given later in \cite{GeQuMoRoLaSe}. Quasi-copulas have interesting applications in several areas, such as fuzzy logic \cite{HaMe,SaTuMe}, fuzzy preference modeling \cite{DeScDeMeDeBa,DiMoDeBa} or similarity measures \cite{DeBaJaDeMe}. Other deep results concerning quasi-copulas can be found in \cite{DeBa,JaDeBaDeMe,NeQuMoRoLaUbFl}.

While copulas are characterized (in the bivariate case) by the nonnegativity of the volume of each sub-rectangle of the unit square $R\subseteq\II^2$ (here $\II=[0,1]$), this is no longer true for quasi-copulas. %This defect of quasi-copulas can be described in several ways, indicating how far away they are from copulas. The authors of \cite{DiSaPlMeKl} introduce several such descriptions and apply them to transform the original quasi-copulas. Note that the sequence of iterative transformations always converges to a copula. This allows them to introduce an equivalence relation on the set of quasi-copulas by grouping quasi-copulas converging to the same copula into an equivalence class. They also give an interesting application of their approach to the so-called imprecise copulas \cite{MoMiMo,MoMiPeVi,PeViMoMi1,PeViMoMi2,Wall}. However, they do not answer one of the main questions proposed there.
%Let us recall Definition 1.7.1 in \cite{DuSe}: for quasi-copulas $A$ and $B$ we write $A\leqslant B$ in the \emph{pointwise order} whenever $A(\mathbf{u})\leqslant B(\mathbf{u})$ for all $\mathbf{u}\in\II^2$ (in \cite[Definition 2.8.1]{Nels} this order is denoted by $A\prec B$ and called the \emph{concordance ordering}).
If $\mathcal{C}$ is any nonempty set of (quasi)copulas, then
\begin{equation}\label{coherent1}
  \underline{C}= \inf\{C\}_{C\in\mathcal{C}}\quad\mbox{and}\quad \overline{C}= \sup\{C\}_{C\in\mathcal{C}}
\end{equation}
(in the point-wise sense) are quasi-copulas by \cite[Theorem 6.2.5]{Nels}. In this respect the set of quasi-copulas is a complete lattice and may be actually seen as an order completion of the set of all copulas. The authors of \cite{MoMiPeVi,PeViMoMi1, PeViMoMi2} introduce their definition of an \emph{imprecise copula} as a set of axioms on a pair of quasi-copulas $(P,Q)$ (cf.\ Conditions (IC1)--(IC4) in Section \ref{sec:impr_sklar}) following the ideas of $p$-boxes and show that the pair $(\underline{C}, \overline{C})$ is always an imprecise copula ``representing'' the set of copulas $\mathcal{C}$ lying pointwise between the two bounds. Montes et al.\ \cite{MoMiPeVi} propose a question in the other direction whether every imprecise copula can be obtained in this way and Omladi\v{c} and Stopar \cite{OmSt} answer this question in the negative using and substantially improving the methods of Dibala et al.\ \cite{DiSaPlMeKl}. Following Omladi\v{c} and \v{S}kulj \cite{OmSk} we will call an imprecise copula $\mathcal{C}$ defined by a pair $(P,Q)$ \emph{coherent} if (with notation \eqref{coherent1}) we have $P=\underline{C}$ and $Q=\overline{C}$.

One of the results of Montes et al.\ \cite{MoMiPeVi} is an imprecise extension of Sklar's theorem but only in one direction using the definition of a bivariate $p$-box introduced in Pelessoni et al.\ \cite{PeViMoMi2}. It is the main goal of this paper to build a full scale imprecise Sklar's theorem; of course, in order to do that we need to adjust somewhat the notion of a bivariate $p$-box as well. Actually, the first attempt of an imprecise Sklar's theorem was given by Nelsen et al.\ \cite[Theorem 2.4]{NeQuMoRoLaUbFl} although they are not calling it so. Our Theorem \ref{thm:sklar_fix}, a Sklar's type theorem in the imprecise setting with fixed margins, may be seen as a substantial extension of their result. They also introduce briefly the notion of a quasi-distribution as a composition of a quasi-copula with given univariate margins. Here we give an axiomatic definition of a quasi-distribution in Section \ref{sec:quasi} and prove in Theorem \ref{thm:sklar_quasi} that our definition is equivalent to theirs. An important outcome of this theorem is that every bivariate distribution whether realized on a usual $\sigma$-additive probability space or on an additive space can be written as a copula in its margins meaning that its possible extraordinariness depends solely on its marginal distributions. Among the main results of this paper we should also point out two versions of a full scale Sklar's theorem in the imprecise setting, Theorems \ref{thm:sklar1} and \ref{thm:sklar2}.

%The comment immediately following Definition 6 in \cite{PeViMoMi1} says: ``We are using the terminology imprecise copula in the definition above because we intend it as a mathematical model for the imprecise knowledge of a copula.'' This might be questionable now that we know that the order interval of quasi-copulas defined by an imprecise copula may contain no copula at all.

The paper is organized as follows. Section \ref{sec:quasi} gives a novel approach to discrete distributions and presents a new approach to both general and discrete quasi-distributions; as already mentioned Theorem \ref{thm:sklar_quasi} proves equivalence of our definition with the definition of Nelsen et al.\ \cite{NeQuMoRoLaUbFl}. In Section \ref{sec:tools} we elaborate extensively the methods of \cite{OmSt} developed there as a tool for equivalent definition of coherence of an imprecise copula. The main result of this section is Theorem \ref{thm:sup inf} that states in principle the sufficient and necessary condition for a restricted bivariate $p$-box to be coherent. Section \ref{sec:impr_sklar} brings finally all these notions together and presents a few versions of Sklar's type theorem in the imprecise setting. An engaged reader may also find there interesting examples giving evidence why our approach may have some advantages over the previous ones.

\section{Discrete vs.\ general quasi-distributions }\label{sec:quasi}

In this section we will study functions of two variables whose domain is either $\mathds{D}=\overline{\mathds{R}}\times\overline{\mathds{R}}$, where $\overline{\mathds{R}}=\mathds{R}\cup\{-\infty,\infty\}$, or a mesh $\Delta=\delta_x\times \delta_y$ determined by some points
\[
    \delta_x=\{-\infty= x_0<x_1<\cdots<x_p=\infty\}\ \mbox{and}\ \delta_y=\{-\infty=y_0<y_1<\cdots<y_q=\infty\}.
\]
We will always assume that each of the sets $\delta_x,\delta_y,$ contains at least one strictly positive point different from $\infty$ and at least one strictly negative point different from $-\infty$. Let $\DD$ denote either $\mathds{D}$ or $\Delta$. A function defined on $\DD$ will be called \emph{general} if $\DD=\mathds{D}$ and \emph{discrete} if $\DD=\Delta$.

Now, choose a rectangle $R$ with \emph{standard corners} $\mathbf{a}, \mathbf{b}, \mathbf{c},$ and $\mathbf{d}$ (by this we mean that the rectangle is positively oriented and that $\mathbf{a}$ is the southwest corner). The sides of all our rectangles will be parallel to the axes. So, $R$ is determined by vertices $\mathbf{a}$ and $\mathbf{c}$ and will often be denoted by $[\mathbf{a},\mathbf{c}]$. For a mesh  $\Delta$ we will say that it is \emph{determined by the rectangles} $[({x}_{i-1},{y}_{j-1}),({x}_{i},{y}_{j})]$ for all $i\in[p]$ and $j\in[q]$. Here and in the sequel we denote by $[n]$ the set $\{1,2,\ldots,n\}$ for any given integer $n$.

Assume that the standard corners of rectangle $R$ are contained in $\DD$ and let $A$ be any function defined on $\DD$. We let the \emph{volume of $R$ with respect to} $A$, or the \emph{$A$-volume of} $R$ (or simply the volume of $R$ if $A$ is understood), be equal to
\[
    V_A(R)= A(\mathbf{a})+A(\mathbf{c})-A(\mathbf{b})-A(\mathbf{d}).
\]

Consider the following possible conditions on a function $F:\DD\to [0,1]$.

\begin{enumerate}[(A)]
	\item {\begin{enumerate}[(1)]
	         \item $F(x,-\infty)=F(-\infty,y)=0$ for all $x,y\in\overline{\mathds{R}}$ if $F$ is a general function and for all $x\in\delta_x$ and $y\in\delta_y$ if $F$ is a discrete function, and
	         \item $F(\infty,\infty)=1$.
	       \end{enumerate} }
	\item Condition (A) together with the following condition: $V_F(R)\geqslant0$ for every rectangle $R$ with corners in $\DD$ that intersects the boundary of $\DD$.
	\item Condition (A) together with the following condition: $V_F(R)\geqslant0$ for every rectangle $R$ with corners in $\DD$.
\end{enumerate}

In the discrete case we understand the boundary of $\DD$ in Condition (B) as those points that have at least one coordinate equal to either $-\infty$ or $\infty$.
Functions with property (B) will be called \emph{(bivariate) quasi-distributions}. Observe that quasi-distributions are always increasing in each variable.
Functions with property (C) will be called \emph{(bivariate) distributions}. In the general case we will sometimes add the word \emph{general} to either of these two notions and in the discrete case we will add the word \emph{discrete}, unless there will be no doubt of confusion.

Let us warn the reader not to confuse our notion of ``discrete distribution'' with the standard notion of the distribution of a discrete random vector, although they are connected in some sense. Namely, for a distribution $F$ of a discrete random vector with finite range one could choose to be defined only on the mesh generated by the points of discontinuity and one could recover the distribution uniquely from the values at these points.

Observe that this approach brings us to (cumulative) distributions (and quasi-distributions) of standard type (i.e., those that arise on the usual $\sigma$-additive probability spaces) if we assume in addition to either Condition (B) or (C) that:
\begin{enumerate}[(D)]
  \item The (quasi) distribution function $F$ is right continuous in each argument.
\end{enumerate}
We will also allow the possibility that (quasi)distributions do not satisfy this assumption which brings us to the case of finitely additive probability spaces. We will say that (quasi)distributions are \emph{of standard type} in the first case and that they are \emph{of extended type} in the second case. It will follow from our Theorem \ref{thm:sklar_quasi} that in both cases we can express a (quasi)distribution as a (quasi)copula in its marginal distribution functions. Consequently, possible extraordinariness of a (quasi)distribution depends solely on its margins! Does this mean that copulas are a stronger probability concept than probability itself?

Recall the definition of the bilinear interpolation from \cite{OmSt} on a mesh determined by rectangles of bounded corners. Let us extend this definition to unbounded corners using linear rational functions. If $\mathbf{a}$ has coordinates $(-\infty,b)$ and $\textbf{b}$ has coordinates $(a,b)$ then we extend the discrete function $A$ defined at these two points via $A(-\infty,b)$ and $A(a,b)$ by letting
\begin{equation}\label{eq:rational1}
  A(x,b)= A(-\infty,b)+\dfrac{a}{x}(A(a,b)-A(-\infty,b))\ \ \mbox{for}\ \ -\infty \leqslant x\leqslant a.
\end{equation}
Observe that the assumption from the beginning of this section yields $a<0$ so that the function $\dfrac{a}{x}$ is strictly increasing on the interval $(-\infty,a)$ and consequently the function $A(x,b)$ is increasing on this interval if and only if it is obtained from an increasing discrete function meaning that $A(a,b)\geqslant A(-\infty,b)$. Similar considerations apply when corner  $\mathbf{a}$ is bounded having coordinates $(a,b)$ and corner $\textbf{b}$ is unbounded having coordinates $(\infty,b)$. In this case we extend the discrete function $A$ by
\begin{equation*}\label{eq:rational2}
  A(x,b)= A(\infty,b)-\dfrac{a}{x}(A(\infty,b)-A(a,b))\ \ \mbox{for}\ \ a\leqslant x\leqslant \infty.
\end{equation*}
Since $a>0$ we have that $-\dfrac{a}{x}$ is strictly increasing on $(a,\infty)$ so that the function $A(x,b)$ is increasing in $x$ on this interval if and only if it is obtained from an increasing discrete function meaning that $A(\infty,b)\geqslant A(a,b)$. Finally, we extend the function $A$ along the vertical lines as well. Choose a rectangle $R$ determining the mesh $\Delta$ with standard notation of the corners $\mathbf{a}, \mathbf{b}, \mathbf{c},$ and $\mathbf{d}$. Observe that at least one of these corners is bounded. We will consider two cases. Case (a) when both corners of the horizontal sides are either bounded or both are unbounded; and case (b) otherwise. In case (a) we extend the function $A$ along these horizontal sides linearly and then extend vertically along each line with a fixed coordinate $x$ by analogy to the above. In case (b) we extend the function $A$ along the horizontal lines as above and then extend vertically along each line with a fixed coordinate $x$ either linearly or by analogy to the above. The result of \cite[Proposition 1]{OmSt} extends easily.

\begin{proposition}\label{bilinear}
  Given the values of a 1-increasing function $A$ at the corners of a rectangle $R$ there exists a unique function $A$ on $R$ such that
\begin{description}
  \item[(a)] its values coincide with the starting values at the corners;
  \item[(b)] each one-dimensional section parallel to the axes is either linear if both ends of the section are of the same bounded/unbounded type or it is linear rational with pole at $0$ otherwise.
\end{description}
\end{proposition}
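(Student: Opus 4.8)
The plan is to run the proof of \cite[Proposition 1]{OmSt} essentially unchanged, replacing the linear one-dimensional interpolant by the linear rational one of \eqref{eq:rational1} (and its companion) exactly on those edges of $R$ that have one bounded and one unbounded endpoint. Write $R=[\mathbf a,\mathbf c]$ with standard corners $\mathbf a=(a_1,a_2),\ \mathbf b=(c_1,a_2),\ \mathbf c=(c_1,c_2),\ \mathbf d=(a_1,c_2)$. Since $R$ is one of the rectangles determining the mesh $\Delta$, the standing assumptions on $\delta_x,\delta_y$ force at most one of $a_1,c_1$ and at most one of $a_2,c_2$ to be infinite, and they guarantee that whenever $a_1=-\infty$ the companion value $c_1$ is strictly negative, while whenever $c_1=+\infty$ the value $a_1$ is strictly positive — and symmetrically in the second coordinate. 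In particular the pole at $0$ of the relevant rational function never lies inside a section we have to interpolate across. Thus along each axis we have a well-defined \emph{rule}: for the first coordinate, interpolate affinely in $x$ if $a_1,c_1$ are of the same bounded/unbounded type and affinely in $1/x$ otherwise (the only combination not covered, $a_1=-\infty$ and $c_1=+\infty$, cannot occur for a mesh rectangle), and analogously for the second coordinate.

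For existence, build $A$ on $R$ by the two-step procedure described just before the statement: first extend $A$ along the bottom edge $y=a_2$ and the top edge $y=c_2$ by the $x$-rule, obtaining functions $x\mapsto A(x,a_2)$ and $x\mapsto A(x,c_2)$; then, for every fixed $x$, extend vertically by applying the $y$-rule to the two values $A(x,a_2)$ and $A(x,c_2)$. Property (a) holds because each one-variable interpolation formula reproduces the values at its two endpoints. For (b), the vertical sections are of the prescribed type by construction, and for the horizontal sections one notes that the $y$-rule writes $A(x,y)=\mu(y)A(x,c_2)+(1-\mu(y))A(x,a_2)$ for a coefficient $\mu(y)$ that is affine in $y$ or in $1/y$ but in any case independent of $x$; since $x\mapsto A(x,a_2)$ and $x\mapsto A(x,c_2)$ are both affine in $x$, resp.\ both affine in $1/x$, the same is true of the affine combination $A(\cdot,y)$ for each fixed $y$. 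This is precisely the computation underlying ordinary bilinear interpolation, merely performed in the basis $\{1,1/x\}$ (resp.\ $\{1,1/y\}$) on the unbounded edges. Finally, each one-variable step is a monotone interpolation between its endpoint values (a linear or linear rational function without a pole in a closed interval is monotone there), so $A$ stays in $[0,1]$, and the monotonicity observations preceding the statement show that, because the input is 1-increasing, so is the resulting $A$.

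For uniqueness, let $A'$ on $R$ also satisfy (a) and (b). On the left edge $x=a_1$ the map $y\mapsto A'(a_1,y)$ is a one-dimensional section, hence linear or linear rational in $y$; such a function on a closed interval missing the pole is determined by its two endpoint values, here $A'(\mathbf a)=A(\mathbf a)$ and $A'(\mathbf d)=A(\mathbf d)$, so $A'=A$ on the left edge, and in the same way on the right edge $x=c_1$. Now fix any $y$: the section $x\mapsto A'(x,y)$ is linear or linear rational in $x$, hence determined by the two values $A'(a_1,y)$ and $A'(c_1,y)$, which lie on the vertical edges where $A'$ and $A$ already agree. Hence $A'\equiv A$ on $R$.

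The only genuinely delicate point is the bookkeeping of the case analysis: checking that every admissible pattern of bounded/unbounded corners of $R$ falls under exactly one of the two rules in each coordinate and, crucially, that the pole at $0$ is always external to every section that must be interpolated — and this is exactly the place where the hypothesis that $\delta_x$ and $\delta_y$ each contain a strictly positive and a strictly negative finite point is indispensable. All the rest is a direct transcription of the bounded case \cite[Proposition 1]{OmSt}, with $x$ (resp.\ $y$) replaced by $1/x$ (resp.\ $1/y$) on the unbounded edges.
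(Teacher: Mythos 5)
Your proposal is correct and follows the same route as the paper, which simply carries out the two-step construction (horizontal interpolation on the two edges, then vertical interpolation at each fixed $x$) described just before the statement and asserts that \cite[Proposition 1]{OmSt} "extends easily." Your verification that the intermediate horizontal sections inherit the affine-in-$x$ (resp.\ affine-in-$1/x$) form because the vertical mixing coefficient $\mu(y)$ is independent of $x$, and your observation that the standing assumptions on $\delta_x,\delta_y$ keep the pole at $0$ outside every section and exclude the doubly unbounded case, are exactly the points the paper leaves implicit.
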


The function on $R$ obtained in this way will be called a \textsl{bilinear interpolation} of $A$ through its values at the corners and denoted by $A^{\mathrm{BL}}$. This definition extends a function $A$ defined on a mesh with no ambiguity to a function denoted by $A^{\mathrm{BL}}$ on the whole unit square $\II^2$ and called again a \textsl{bilinear interpolation} of $A$.

\begin{corollary}\label{positive}
  The bilinear interpolation $A$ of a 1-increasing function defined on the corners of $R$ is 1-increasing. Moreover, for every subrectangle $R_1\subseteq R$ we have:
\begin{description}
  \item[(a)] $V_A(R_1)>0$ if and only if $V_A(R)>0$
  \item[(b)] $V_A(R_1)<0$ if and only if $V_A(R)<0$
  \item[(c)] $V_A(R_1)=0$ if and only if $V_A(R)=0$
\end{description}
\end{corollary}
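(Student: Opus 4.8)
The plan is to reduce the statement about an arbitrary subrectangle $R_1 \subseteq R$ to an explicit computation of $V_A(R_1)$ in terms of $V_A(R)$ and the coordinates of the corners, exploiting the very rigid form of the one-dimensional sections guaranteed by Proposition \ref{bilinear}. First I would set up coordinates: write $R = [\mathbf{a}, \mathbf{c}]$ with $\mathbf{a} = (s_0, t_0)$ and $\mathbf{c} = (s_1, t_1)$, and a subrectangle $R_1$ with corners whose $x$-coordinates are $u_0 \le u_1$ in $[s_0, s_1]$ and $y$-coordinates $v_0 \le v_1$ in $[t_0, t_1]$. By Proposition \ref{bilinear} the function $A^{\mathrm{BL}}$ on $R$ is built from one-dimensional sections that are each either affine or affine in $1/x$ (resp.\ $1/y$), so in each variable separately there is a strictly increasing change of variable — the identity in the bounded/bounded case, and $x \mapsto -1/x$ (up to affine normalization) in the mixed case — that turns $A^{\mathrm{BL}}$ into an honest bilinear (affine-in-each-variable) function on the transformed rectangle. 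The key point is that this reparametrization is a strictly monotone bijection on each coordinate interval, hence carries $R_1$ to a genuine subrectangle of the transformed $R$ and preserves the sign structure of all volumes.

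Having made this reduction, the core computation is the classical one for a bilinear function $B(x,y) = \alpha + \beta x + \gamma y + \delta xy$ on a rectangle: for any subrectangle with $x$-side $[u_0,u_1]$ and $y$-side $[v_0,v_1]$ one gets $V_B([u_0,u_1]\times[v_0,v_1]) = \delta\,(u_1-u_0)(v_1-v_0)$, and in particular $V_B(R) = \delta\,(s_1-s_0)(t_1-t_0)$. Since $u_1 - u_0 > 0$ and $v_1 - v_0 > 0$ whenever $R_1$ is a nondegenerate rectangle (and the endpoints of $R$ are distinct by construction of the mesh), $V_B(R_1)$ and $V_B(R)$ are positive multiples of the same scalar $\delta$. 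This gives all three equivalences (a), (b), (c) simultaneously, and the sign of $\delta$ is determined by $V_B(R)$, equivalently by $V_A(R)$. The 1-increasing claim follows because, by the discussion preceding Proposition \ref{bilinear}, each section is increasing exactly when the discrete data it interpolates is increasing; since $A$ is 1-increasing on the corners, every section is monotone increasing, which is precisely what 1-increasing on $R$ means (it is equivalent to monotonicity of the axis-parallel sections together with nonnegativity being irrelevant here, since 1-increasing refers to the sections, not the volume).

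The main obstacle I anticipate is bookkeeping rather than conceptual: one must handle the several geometric configurations of $R$ that Proposition \ref{bilinear} distinguishes — both horizontal ends bounded, both unbounded, or mixed, and similarly vertically — and verify in each case that the reparametrization genuinely converts $A^{\mathrm{BL}}$ into a function that is jointly bilinear in the new variables, not merely separately affine along the extreme sections. This requires checking that the "vertical then horizontal" interpolation procedure described before Proposition \ref{bilinear} produces a function whose restriction to each internal vertical line is again of the prescribed one-dimensional form with coefficients depending affinely (in the appropriate transformed variable) on $x$; this is exactly the content that makes the composite interpolation well-defined and is where \cite[Proposition 1]{OmSt} and its stated extension do the work. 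Once that structural fact is in hand, the volume identity $V_B(R_1) = \delta (u_1-u_0)(v_1-v_0)$ is a two-line computation and the corollary follows immediately.
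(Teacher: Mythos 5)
Your proof is correct, but it takes a genuinely different route from the paper's. The paper argues edge by edge: fixing a reference configuration (e.g.\ west corners at $x=-\infty$, east corners bounded), it moves one side of the rectangle at a time and computes directly from \eqref{eq:rational1} and the linear interpolation that each move multiplies the volume by an explicit positive factor ($\tfrac{a}{a'}$, $\tfrac{c'-b}{c-b}$, $\tfrac{c-b'}{c-b}$, $\tfrac{a-d}{-d}$), so the sign is preserved; the remaining bounded/unbounded configurations are dispatched as ``similar tedious but simple computations.'' You instead exhibit the interpolant globally as $A(x,y)=\alpha+\beta\,\psi(x)+\gamma\,\phi(y)+\delta\,\psi(x)\phi(y)$ with $\delta=V_A(R)$, where $\psi$ and $\phi$ are the strictly increasing reparametrizations (identity, or of the type $x\mapsto a/x$) dictated by the bounded/unbounded type of the sides, and read off $V_A(R_1)=V_A(R)\,\bigl(\psi(u_1)-\psi(u_0)\bigr)\bigl(\phi(v_1)-\phi(v_0)\bigr)$ for every nondegenerate subrectangle; this one identity gives (a)--(c) in all configurations simultaneously and packages the paper's positive factors into a single formula. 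The structural point you rightly flag --- that one and the same $\psi$ governs every horizontal section and one and the same $\phi$ every vertical section --- does hold: the corners of $R$ are axis-aligned, so the bounded/unbounded type of a horizontal (resp.\ vertical) section is independent of its height (resp.\ abscissa), and the uniqueness statement of Proposition \ref{bilinear} then identifies the displayed formula with $A^{\mathrm{BL}}$. Your argument for the 1-increasing claim is also sound (from $\beta\geqslant 0$ and $\beta+\delta\geqslant 0$ one gets $\beta+\delta\phi(y)\geqslant 0$ for $\phi(y)\in[0,1]$, and symmetrically in the other variable), although the final clause of that sentence is garbled and should be rewritten; and, as you note, (a) and (b) require $R_1$ nondegenerate, a convention the paper shares implicitly.
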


\begin{proof}
  This was proven in \cite{OmSt} for rectangles with all the four corners bounded. Among rectangles with some unbounded corners we first consider the case that corners $\mathbf{a}= (-\infty,b)$ and $\mathbf{d}= (-\infty,c)$ of rectangle $R$ are unbounded and its corners $\mathbf{b}= (a,b)$ and $\mathbf{c}= (a,c)$ are bounded. If we change the east vertical side from coordinate $x=a$ westwards to coordinate $x=a'<a$ the volume changes from the starting volume
  \begin{equation*}\label{startvolume}
    V_A(R)=A(-\infty,b)-A(-\infty,c)+A(a,c)-A(a,b)
  \end{equation*}
  to $\dfrac{a}{a'} V_A(R)$ as seen after a simple computation using \eqref{eq:rational1}. If we change the north horizontal side from coordinate $y=c$ southwards to coordinate $y=c'<c$ the volume changes from the starting volume to $\dfrac{c'-b}{c-b}V_A(R)$ as seen via linear interpolation, and if  we change the south horizontal side from coordinate $y=b$ northwards to coordinate $y=b'>b$ the volume changes from the starting volume to $\dfrac{c-b'}{c-b}V_A(R)$ again seen via linear interpolation. Now, if we change the unbounded west vertical side to a bounded coordinate $x=d, -\infty<d<a<0$, the volume changes, using \eqref{eq:rational1} to $\dfrac{a-d}{-d}V_A(R)$. Consequently, in any of the four possible cases, the sign of the volume stays unchanged.

  The other possibilities are obtained using similar tedious but simple computations.
\end{proof}

\begin{proposition}
  For any mesh $\Delta$ and any of the three conditions above (X)=(A), (X)=(B), or (X)=(C), it holds that
  \begin{enumerate}
    \item If a general function $A:\mathds{D}\to \mathds{R}$ satisfies Condition (X), then the discrete function $A|_\Delta$ satisfies Condition (X).
    \item If a discrete function $A:\Delta\to\mathds{R}$ satisfies Condition (X), then its bilinear interpolation $A^\mathrm{BL}$ satisfies Condition (X).
  \end{enumerate}
\end{proposition}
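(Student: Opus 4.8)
The plan is to dispatch the three conditions one at a time, treating the two directions separately, and to isolate the one genuinely non-formal point, which is Condition (B) in direction (2). \emph{Direction (1)} is bookkeeping. Condition (A) passes to $A|_\Delta$ at once, since $\delta_x\subseteq\overline{\mathds{R}}$, $\delta_y\subseteq\overline{\mathds{R}}$ and $(\infty,\infty)\in\Delta$. For (B) and (C), note that whether a rectangle $R=[\mathbf{a},\mathbf{c}]$ meets the boundary depends only on $R$ itself --- it does precisely when $\mathbf{a}$ has a coordinate equal to $-\infty$ or $\mathbf{c}$ has a coordinate equal to $+\infty$ --- so a rectangle with corners in $\Delta$ meets the boundary of $\Delta$ exactly when it meets the boundary of $\mathds{D}$; and since $V_{A|_\Delta}(R)=V_A(R)$ whenever all four corners lie in $\Delta$, Condition (B) (respectively (C)) for $A$, read on such rectangles, is Condition (B) (respectively (C)) for $A|_\Delta$.

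For \emph{direction (2), Conditions (A) and (C)}, start with (A): $A^{\mathrm{BL}}$ coincides with $A$ at the mesh points, so $A^{\mathrm{BL}}(\infty,\infty)=1$; and on every side of the mesh lying in $\{x=-\infty\}$ or in $\{y=-\infty\}$, $A^{\mathrm{BL}}$ is the one-dimensional (linear or linear-rational) interpolant of two values equal to $0$, hence is identically $0$, so $A^{\mathrm{BL}}$ vanishes on $\{x=-\infty\}\cup\{y=-\infty\}$. For (C), given any rectangle $R$ with corners in $\mathds{D}$, refine it by the mesh lines crossing its interior into a grid of subrectangles $R_{uv}$, each contained in a single rectangle $C_{uv}$ determining $\Delta$; the volume functional is additive under such refinements, so $V_{A^{\mathrm{BL}}}(R)=\sum_{u,v}V_{A^{\mathrm{BL}}}(R_{uv})$, and by Corollary \ref{positive} each summand has the same sign as $V_{A^{\mathrm{BL}}}(C_{uv})=V_A(C_{uv})$, which is $\geq 0$ by Condition (C) for $A$. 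Hence $V_{A^{\mathrm{BL}}}(R)\geq 0$.

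The real work is \emph{direction (2), Condition (B)}, because Condition (B) for the discrete $A$ says nothing about the volumes of the interior rectangles $C_{uv}$, so the refinement argument above collapses. First, by Corollary \ref{positive} (monotonicity glues across mesh lines, the value of $A^{\mathrm{BL}}$ at a mesh point being unambiguous), $A^{\mathrm{BL}}$ is increasing in each variable. Now let $R=[\mathbf{a},\mathbf{c}]$ have corners in $\mathds{D}$ and meet the boundary. If $\mathbf{a}$ has a coordinate $-\infty$ --- the west or south side of $R$ sits at infinity --- then two of the four corner values of $A^{\mathrm{BL}}$ vanish and $V_{A^{\mathrm{BL}}}(R)$ collapses to $A^{\mathrm{BL}}(\mathbf{c})-A^{\mathrm{BL}}(\mathbf{b})$ or to $A^{\mathrm{BL}}(\mathbf{c})-A^{\mathrm{BL}}(\mathbf{d})$, which is $\geq 0$ by monotonicity. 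It remains to treat $R=[(a_1,a_2),(\infty,c_2)]$ whose east side sits at $x=\infty$; the case of $R$ whose north side sits at $y=\infty$ is symmetric. For these I would show $V_{A^{\mathrm{BL}}}\bigl([(x,s),(\infty,t)]\bigr)\geq 0$ for all $x\in\overline{\mathds{R}}$ and all $s\leq t$ in two steps. First, for $x=x_i$ a mesh point, the function $\phi(y)=A^{\mathrm{BL}}(\infty,y)-A^{\mathrm{BL}}(x_i,y)$ is linear or linear-rational with pole at $0$ on each interval $[y_{j-1},y_j]$ of the $y$-mesh (cf.\ Proposition \ref{bilinear}), hence monotone on it, while $\phi(y_j)-\phi(y_{j-1})=V_A\bigl([(x_i,y_{j-1}),(\infty,y_j)]\bigr)\geq 0$ by Condition (B) for $A$ applied to this mesh rectangle touching $x=\infty$; a function monotone on each such interval and non-decreasing from one endpoint to the next is non-decreasing throughout, so $\phi$ is non-decreasing and $V_{A^{\mathrm{BL}}}([(x_i,s),(\infty,t)])=\phi(t)-\phi(s)\geq 0$. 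Second, for arbitrary $x$ in an interval $[x_{k-1},x_k]$ of the $x$-mesh, the map $\xi\mapsto V_{A^{\mathrm{BL}}}([(\xi,s),(\infty,t)])$ is again linear or linear-rational with pole at $0$ on that interval, so its value at $x$ lies between its values at $x_{k-1}$ and at $x_k$, both $\geq 0$ by the first step. Taking $x=a_1$, $s=a_2$, $t=c_2$ settles the east-side case, and exchanging the roles of the two coordinates settles the north-side case.

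The step I expect to be the main obstacle is precisely this last one: one must see that negative volume carried by interior mesh rectangles cannot propagate to a boundary-touching rectangle, and the efficient way to encode this is the forced monotonicity of the one-dimensional slice difference $\phi$ --- pinned down at the mesh points by Condition (B) for $A$, and in between by the bilinear (linear / linear-rational) shape of $A^{\mathrm{BL}}$ on each rectangle of the mesh.
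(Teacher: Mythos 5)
Your proof is correct. There is little to compare it against: the paper disposes of this proposition in one line by asserting that ``the proof goes in a similar way as the proof of \cite[Proposition 4]{OmSt}'', so your write-up supplies precisely the details that citation conceals, adapted to $\RRR^2$. Direction (1) and the (A)/(C) parts of direction (2) are routine (restriction, groundedness of the interpolant, and grid refinement plus the sign-preservation of Corollary \ref{positive}), and you correctly isolate the only substantive point, namely Condition (B) in direction (2), where the discrete hypothesis says nothing about interior cell volumes. Your treatment of it is sound: the slice $\phi(y)=A^{\mathrm{BL}}(\infty,y)-A^{\mathrm{BL}}(x_i,y)$ is pinned down at mesh heights by the boundary rectangles $[(x_i,y_{j-1}),(\infty,y_j)]$ of Condition (B) and is linear or linear-rational (with pole outside the interval, by the standing assumption that each of $\delta_x,\delta_y$ contains a positive and a negative finite point) on each mesh interval, hence globally non-decreasing; the subsequent interpolation in $\xi$ across a single $x$-cell is monotone for the same reason, so nonnegativity at the two mesh abscissae forces it at every intermediate $\xi$.
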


\begin{proof}
  The proof goes in a similar way as the proof of \cite[Proposition 4]{OmSt}.
\end{proof}

Let $F_X(x)$ and $F_Y(y)$ be two univariate distributions either of standard or of extended type. Furthermore, let $\mathfrak{Q}^{F_X,F_Y}$, respectively $\mathfrak{D}^{F_X,F_Y}$, be the set of all quasi-distributions, respectively distributions, with these two univariate distributions as margins, i.e.\ such that
\[
    F(x,\infty)= F_X(x)\ \ \mbox{and}\ \ F(\infty,y)= F_Y(y).
\]

\begin{theorem}[Sklar's theorem for quasi-distributions] \label{thm:sklar_quasi}
  For any function $F$ in $\mathfrak{Q}^{F_X,F_Y}$, respectively $\mathfrak{D}^{F_X,F_Y}$, there exists a quasi-copula, respectively a copula, $C$ such that
  \[
    F(x,y)=C(F_X(x),F_Y(y)).
  \]
\end{theorem}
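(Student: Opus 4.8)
The plan is to recover the (quasi-)copula $C$ by the classical Sklar recipe, adapted to the extended/discrete setting developed above. First I would handle the case in which both margins $F_X$ and $F_Y$ are continuous and strictly increasing on $\RR$, so that the generalized inverses $F_X^{-1},F_Y^{-1}$ are honest functions from $\II$ onto $\RRR$: then one simply sets $C(u,v)=F(F_X^{-1}(u),F_Y^{-1}(v))$ and checks that $C$ inherits Condition (B), respectively (C), from $F$ — the boundary conditions (A) transfer immediately from the margin identities $F(x,\infty)=F_X(x)$, $F(\infty,y)=F_Y(y)$ together with (A1), and the volume of any subrectangle of $\II^2$ pulls back to the $F$-volume of the corresponding rectangle in $\DD$, which is nonnegative by hypothesis (touching the boundary of $\DD$ in the quasi-distribution case, arbitrary in the distribution case). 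That the resulting $C$ is a genuine quasi-copula (Lipschitz, with uniform margins) is then the standard verification, using that a 1-increasing and 2-increasing-on-the-boundary function with uniform margins is automatically 1-Lipschitz.

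The substantive work is the general case, where $F_X$ or $F_Y$ may have flat stretches or jumps (this is exactly where being ``of extended type'' matters). Here the value $F(x,y)$ determines $C$ only on the closure of the range $\mathrm{Ran}(F_X)\times\mathrm{Ran}(F_Y)\subseteq\II^2$, and $C$ must be defined on all of $\II^2$ by interpolation. My plan is to first define $C$ on the product of the (closed) ranges by the pseudo-inverse formula $C(u,v)=F(F_X^{(-1)}(u),F_Y^{(-1)}(v))$, where $F_X^{(-1)}$ is the quasi-inverse (any quasi-inverse works on the range because $F$ is constant on the ``gaps'' — here one must check that if $F_X(x)=F_X(x')$ then $F(x,y)=F(x',y)$ for all $y$, which follows from $F$ being increasing in $x$ and having the $F$-volume of the degenerate-width rectangle equal to $0$). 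Flat stretches of $F_X$ thus collapse harmlessly; jumps of $F_X$ leave a vertical gap $(F_X(x^-),F_X(x))$ in the range across which $C$ has not yet been assigned. On each such maximal missing product-sub-rectangle of $\II^2$ whose four corners already carry values, I would fill $C$ in by the bilinear interpolation $A^{\mathrm{BL}}$ of Proposition~\ref{bilinear} and Corollary~\ref{positive}. Corollary~\ref{positive} is exactly what guarantees that this filling does not destroy 1-monotonicity or create negative volumes: the sign of the volume of every subrectangle of a fill-in block equals the sign of the volume of the block itself, and the latter is an $F$-volume (of the preimage rectangle, possibly meeting $\partial\DD$), hence $\geq 0$.

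After the pieces are assembled one must check global consistency and global 2-increasingness: a rectangle in $\II^2$ that straddles several fill-in blocks and several ``genuine'' cells has its volume equal to a sum of volumes of the constituent pieces, each of which is nonnegative (using Corollary~\ref{positive}(a)--(c) together with the hypothesis on $F$), so the total is nonnegative; in the copula case one has to see that every subrectangle, not merely boundary-touching ones, gets nonnegative volume, which is where Condition (C) on $F$ (rather than just (B)) is used. Continuity/1-Lipschitzness and the uniform-margin property of $C$ follow because on the genuine cells they are inherited from $F$ via the margins, and on the fill-in blocks they are built into $A^{\mathrm{BL}}$; the margin identities $C(u,1)=u$, $C(1,v)=v$ reduce to $F(x,\infty)=F_X(x)$ and $F(\infty,y)=F_Y(y)$ evaluated through the quasi-inverses. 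Finally $C(F_X(x),F_Y(y))=F(x,y)$ holds by construction, since $F_X^{(-1)}(F_X(x))$ lands in the fibre on which $F(\cdot,y)$ is constant.

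The main obstacle I expect is the bookkeeping around jumps in the margins: making precise which product rectangles in $\II^2$ are ``missing,'' verifying that each such block has all four corners already defined and carries a nonnegative (boundary-)volume so that Corollary~\ref{positive} applies, and then proving that an arbitrary rectangle's volume decomposes additively over the resulting partition of $\II^2$ into genuine cells and fill-in blocks. None of the individual steps is deep — each is a variant of something in \cite{OmSt} or of the classical Sklar argument — but assembling them uniformly for both the quasi-distribution/quasi-copula case and the distribution/copula case, and for both standard and extended type, is where the care lies.
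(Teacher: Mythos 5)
Your construction of $C$ follows the same route as the paper's proof: define $C$ on $\mathrm{Ran}\,F_X\times\mathrm{Ran}\,F_Y$ by $C(F_X(x),F_Y(y))=F(x,y)$, check well-definedness on flat stretches, pass to the closure by monotone limits, and fill the jump gaps $J(x_i)\times\overline{\mathrm{Ran}\,F_Y}$, $\overline{\mathrm{Ran}\,F_X}\times J(y_j)$ and $J(x_i)\times J(y_j)$ by linear, respectively bilinear, interpolation. Up to that point the proposal is sound and matches the paper.

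The gap is in your final verification of the volume condition in the \emph{quasi-copula} case. You propose to take a rectangle $R\subseteq\II^2$ that straddles several fill-in blocks and genuine cells, decompose it additively into constituent pieces, and argue that each piece has nonnegative volume, invoking Corollary~\ref{positive} plus ``the hypothesis on $F$.'' For a quasi-distribution, hypothesis (B) controls only rectangles meeting the boundary of $\DD$; a fill-in block sitting in the interior of $\II^2$ pulls back to an interior rectangle of $\DD$, whose $F$-volume may well be negative (this is exactly what distinguishes a quasi-distribution from a distribution), and the same goes for the interior pieces of the decomposition of a boundary-touching $R$. So ``each of which is nonnegative'' fails, and additivity of the volume over the pieces gives you nothing. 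Your argument is fine for the copula case, where (C) makes every piece nonnegative, but for condition (B) you must not break $R$ apart. The paper's fix is to keep $R$ whole: first push each corner to the endpoints of its gap by taking limits (which preserves the inequality), and then observe that, as a corner coordinate varies across a gap interval, $V_C(R)$ is an affine (on type (a) sets) or bi-affine (on type (b) sets) function of that coordinate; since it is nonnegative when the corners sit at gap endpoints --- there the whole, still boundary-touching, rectangle pulls back to a boundary-touching rectangle of $\DD$ and (B) applies --- and since such a function attains its minimum at the endpoints/corners, it is nonnegative throughout. You need to replace your decomposition step by an argument of this kind; the rest of your outline then goes through.

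A smaller point: in the extended-type case a margin may fail to be one-sidedly continuous at a jump, so that $F_X(x_i)$ lies strictly between the left and right limits and a single jump produces \emph{two} gap intervals. Your bookkeeping of the sets $J(x_i)$ should account for this (the paper does so by ``counting the jump twice'').
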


\begin{proof}
  It suffices to prove the quasi-copula part. The well-known copula part follows also easily from that. We adjust one of the standard proofs of Sklar's theorem (cf., say, \cite[Section 2.3]{DuSe}). There is at most a countable set of discontinuities of a monotone function (Froda's theorem) that are all jumps; denote the set of jumps of $F_X$ by $\{x_i\}$ and the set of jumps of $F_Y$ by $\{y_j\}$. (By an abuse of notation we count a jump twice in case that the distribution function is not continuous on either the left hand side or the right hand side to take care of both the left and the right jump; observe that this cannot occur in case of standard probability.)
  For $(u,v)\in \mathrm{Ran}\,F_X\times \mathrm{Ran}\,F_Y$ (here we denote by $\mathrm{Ran}\,F$ the range of a function $F$) there exist $x,y\in \overline{\mathds{R}}$ such that $u=F_X(x)$ and $v=F_Y(y)$; so we let $C(u,v)=F(x,y)$. It is easy to extend this definition by monotonicity to the set $\overline{\mathrm{Ran}\,F_X}\times \overline{\mathrm{Ran}\,F_Y}$, where the over-line denotes the closure as usually. Indeed, if $v=F_Y(y)$ and $u\in\overline{ \mathrm{Ran}\,F_X} \setminus\mathrm{Ran}\,F_X$, then either $u=\lim_{x\to x_i-0}F_X(x)$ or $u=\lim_{x\to x_i+0}F_X(x)$ for one of the jumps $x_i$, so that one should let $C(u,v)=\lim_{x\to x_i-0}F(x,y)$ in the first case and $C(u,v)=\lim_{x\to x_i+0}F(x,y)$ in the second one. Observe that both limits exist since every quasi-distribution is monotone in each of the two variables. In case of a double jump one needs to adjust this definition in an obvious way.
  Once $C$ is extended to $\overline{\mathrm{Ran}\,F_X}\times \mathrm{Ran}\,F_Y$ in this way, we extend it similarly to the whole desired set.
  So, we need to extend $C$ to two more types of sets:
  \begin{enumerate}[(a)]
    \item $J(x_i)\times\overline{\mathrm{Ran}\,F_Y}$, and $\overline{\mathrm{Ran}\,F_X}\times J(y_j)$;
    \item $J(x_i)\times J(y_j)$;
  \end{enumerate}
  where $J(x_i)$ and $J(y_j)$ are open intervals representing the jumps of the corresponding distribution functions. Observe that (only) the endpoints of $J(x_i)$ belong to $\overline{\mathrm{Ran}\,F_X}$ and (only) the endpoints of $J(y_j)$ belong to $\overline{\mathrm{Ran}\,F_Y}$. So, we can extend the value of $C$ linearly in one of the variables on the sets of type (a). Also, (only) the vertices of any rectangle of type (b) belong to $\overline{\mathrm{Ran}\,F_X}\times \overline{\mathrm{Ran}\,F_Y}$. Thus, we can extend the value of $C$ bilinearly on these sets.

  It remains to see that the so obtained function $C$ is a quasi-copula. We will prove this using an equivalent definition of \cite[p.\ 236]{Nels} (cf.\ also the definitions immediately following \cite[Proposition 3]{OmSt}). We see that $C$ satisfies Conditions (A) and (B) of \cite[Section 2]{OmSt} via a routine verification. So, we only need to show that Condition (D) of \cite[Section 2]{OmSt} (which is our Condition (B)) also holds true. Choose an arbitrary rectangle $R$ with standard corners $\mathbf{a}$, $\mathbf{b}$, $\mathbf{c}$, and $\mathbf{d}$, intersecting the boundary of $\mathds{I}^2$. If all the corners belong to $\mathrm{Ran}\,F_X \times \mathrm{Ran}\,F_Y$, then we can find a rectangle $R'$ with corners $\mathbf{a}'$, $\mathbf{b}'$, $\mathbf{c}'$, and $\mathbf{d}'$, intersecting the boundary of $\overline{\mathds{R}}^2$ and such that $\mathbf{a}=(F_X\times F_Y)(\mathbf{a}')$, $\mathbf{b}=(F_X\times F_Y)(\mathbf{b}')$, $\mathbf{c}=(F_X\times F_Y)(\mathbf{c}')$, and $\mathbf{d}=(F_X\times F_Y)(\mathbf{d}')$. It follows easily that $V_C(R) =V_F(R')\geqslant0$ by Condition (B) on the function $F$. So, the desired condition
  \begin{equation}\label{sklar+}
    V_C(R)\geqslant0
  \end{equation}
  is satisfied as soon as $R$ has nonempty intersection with the boundary of $\DD$ and the vertices of $R$ belong to $\mathrm{Ran}\,F_X \times \mathrm{Ran}\,F_Y$.
  %However, at least two of the vertices of such a rectangle always do satisfy this condition.
  In the rest of the cases the construction of $C$ %at the other two
  is done in two steps. First, we send the coordinates to the limits which clearly preserves Condition \eqref{sklar+}. Second, it is done either in linear or in bilinear way from vertices for whose rectangles Condition \eqref{sklar+} already holds (depending on whether they are of type (a) or type (b)). The value of $V_C(R)$ may be seen as either a linear or a bilinear function. Since a linear function cannot change the sign more than once on an interval and since $V_C(R)$ is positive at the endpoints, it is positive everywhere on the interval and the theorem follows.
\end{proof}

\begin{corollary}\label{prop:compact}
  The sets $\mathfrak{Q}^{F_X,F_Y}$ and $\mathfrak{D}^{F_X,F_Y}$ are compact in the uniform norm.
\end{corollary}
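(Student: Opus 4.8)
The plan is to exhibit both $\mathfrak{Q}^{F_X,F_Y}$ and $\mathfrak{D}^{F_X,F_Y}$ as continuous images of sets that are already known to be compact in the uniform norm, namely the set $\mathcal{Q}$ of all bivariate quasi-copulas and the set $\mathcal{C}\subseteq\mathcal{Q}$ of all bivariate copulas; the connecting map is the composition operator supplied by Theorem~\ref{thm:sklar_quasi}. Concretely, I would view $\mathcal{Q}$ and $\mathcal{C}$ inside the Banach space of bounded functions on $\II^2$ with the uniform norm and introduce
\[
  \Phi\colon\mathcal{Q}\To\mathbf{B}(\mathds{D}),\qquad \Phi(C)(x,y)=C\bigl(F_X(x),F_Y(y)\bigr),
\]
where $\mathbf{B}(\mathds{D})$ denotes the bounded functions on $\mathds{D}$ with the uniform norm. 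The estimate $\bigl|\Phi(C)(x,y)-\Phi(C')(x,y)\bigr|=\bigl|C(F_X(x),F_Y(y))-C'(F_X(x),F_Y(y))\bigr|\leqslant\norm{C-C'}$, valid at every $(x,y)\in\mathds{D}$, shows that $\Phi$ is $1$-Lipschitz, hence continuous.

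Next I would record that $\mathcal{Q}$ is compact: its elements are uniformly bounded by $1$ and, being $1$-Lipschitz, equicontinuous on the compact square $\II^2$, so Arzel\`a--Ascoli gives relative compactness, while $\mathcal{Q}$ is closed in the uniform norm because the prescribed boundary values, monotonicity in each variable, and the $1$-Lipschitz bound all survive uniform (indeed pointwise) limits. Running the same closedness argument for the extra requirement ``$V_C(R)\geqslant 0$ for every rectangle $R\subseteq\II^2$'' shows that $\mathcal{C}$ is closed in $\mathcal{Q}$, hence also compact. Therefore $\Phi(\mathcal{Q})$ and $\Phi(\mathcal{C})$ are compact, and it only remains to identify these images as $\mathfrak{Q}^{F_X,F_Y}$ and $\mathfrak{D}^{F_X,F_Y}$ respectively.

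The inclusions $\mathfrak{Q}^{F_X,F_Y}\subseteq\Phi(\mathcal{Q})$ and $\mathfrak{D}^{F_X,F_Y}\subseteq\Phi(\mathcal{C})$ are exactly Theorem~\ref{thm:sklar_quasi} in its quasi-copula and its copula version. For the reverse inclusions I would check directly that $\Phi(C)$ always lies in the appropriate set: the marginal identities $\Phi(C)(x,\infty)=C(F_X(x),1)=F_X(x)$ and $\Phi(C)(\infty,y)=F_Y(y)$, together with $\Phi(C)(x,-\infty)=\Phi(C)(-\infty,y)=0$ and $\Phi(C)(\infty,\infty)=1$, follow at once from the quasi-copula axioms, giving Condition~(A) and the required margins. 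For the volume condition, for any rectangle $R=[(x_1,y_1),(x_2,y_2)]$ with corners in $\mathds{D}$ one has $V_{\Phi(C)}(R)=V_C(R')$, where $R'$ is the (possibly degenerate, since $F_X,F_Y$ need not be strictly increasing) rectangle with corners $(F_X(x_i),F_Y(y_j))$; and if $R$ meets the boundary of $\mathds{D}$, i.e.\ some corner of $R$ has an infinite coordinate, then the corresponding side of $R'$ lies on the boundary of $\II^2$ (because $F_X(-\infty)=F_Y(-\infty)=0$ and $F_X(\infty)=F_Y(\infty)=1$), so $V_C(R')\geqslant 0$ by the boundary-volume characterization of quasi-copulas recalled in the proof of Theorem~\ref{thm:sklar_quasi}. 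This yields $\Phi(C)\in\mathfrak{Q}^{F_X,F_Y}$, and when $C$ is a copula the inequality $V_C(R')\geqslant 0$ holds for \emph{every} $R'\subseteq\II^2$, so $\Phi(C)\in\mathfrak{D}^{F_X,F_Y}$. Hence each of the two sets coincides with a continuous image of a compact set and is compact. (Alternatively, once $\mathfrak{Q}^{F_X,F_Y}$ is known compact, $\mathfrak{D}^{F_X,F_Y}$ is compact as a closed subset of it, the requirement $V_F(R)\geqslant 0$ for all rectangles $R$ being preserved under uniform limits.)

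I expect the only genuinely delicate point to be this last identification, and within it the bookkeeping for rectangles touching the boundary of $\mathds{D}$ and the handling of the degenerate rectangles $R'$ that arise wherever $F_X$ or $F_Y$ is locally constant; the compactness of $\mathcal{Q}$ and $\mathcal{C}$ and the continuity of $\Phi$ are routine.
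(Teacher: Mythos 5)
Your argument is correct and is essentially the paper's own proof written out in full: the paper simply cites Theorem~\ref{thm:sklar_quasi} together with the compactness of the sets of copulas and quasi-copulas (quoted from Durante--Sempi), whereas you reprove that compactness via Arzel\`a--Ascoli and verify explicitly that the composition map $C\mapsto C(F_X,F_Y)$ is continuous and maps $\mathcal{Q}$ and $\mathcal{C}$ \emph{onto} $\mathfrak{Q}^{F_X,F_Y}$ and $\mathfrak{D}^{F_X,F_Y}$. The details you supply (in particular the boundary-rectangle bookkeeping for the reverse inclusion) are sound and consistent with the paper's Condition~(B).
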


\begin{proof}
  This is an immediate consequence of Theorem \ref{thm:sklar_quasi} and Theorems \cite[1.7.7\,\&\,7.3.1]{DuSe}.
\end{proof}

\section{Introducing the main tools}\label{sec:tools}

Here we present some tools to treat copulas and quasi-copulas from \cite{OmSt} expanding them from the unit square to the extended real plane thus making them useful in treating distributions and quasi-distributions. Since many of the proofs are quite technical on one side and depend only on the relations between certain functions defined on rectangles (and disjoint unions of them), we give these results without proofs. We only exhibit the deeper proofs where it becomes important that the rectangles belong to $\RRR^2$ and not only to $\II^2$.

As in Section \ref{sec:quasi} we assume that a real function $A$ is defined on $\DD$ which is either $\RRR^2$ in the general case or a mesh $\Delta$ in the discrete case. For a rectangle $R$ with {distinct} standard vertices $\mathbf{a}, \mathbf{b}, \mathbf{c}, \mathbf{d}$ we define the \textsl{main corner set} $M(R)=\{\mathbf{a},\mathbf{c}\}$ and the \textsl{opposite corner set} $O(R)= \{\mathbf{b},\mathbf{d}\}$. Given a rectangle $R$ we define for any point $\mathbf{x}\in\RRR^2$ its multiplicity by
\[
    m_R(\mathbf{x})=\left\{
           \begin{array}{ll}
             1, & \hbox{if $\mathbf{x}\in M(R)$;} \\
             -1, & \hbox{if $\mathbf{x}\in O(R)$;} \\
             0, & \hbox{otherwise.}
           \end{array}
         \right.
\]
Let us amplify this definition to any $R\in\mathfrak{R}$, the set of all disjoint unions of rectangles with vertices in $\DD$, i.e.\ if $\{R_i\}_{i=1}^{n}$ is an arbitrary finite set of rectangles of the kind, then an element of $\mathfrak{R}$ is of the form $R=\bigsqcup_{i=1}^nR_i$, where $\bigsqcup$ denotes the disjoint union, and we let $m_R(\mathbf{x})= \sum_{i=1}^n m_{R_i}(\mathbf{x})$. The volume of an element $R\in \mathfrak{R}$ corresponding to the real valued function $A$ (or the $A$-volume of $R$) is
\[
    V_A(R)=\sum_{\mathbf{x}\in{\RRR\times\RRR}}A(\mathbf{x})m_R(\mathbf{x});
\]
clearly, this sum is finite. It is also clear that when specializing to rectangles, distributions and quasi-distributions this definition coincides with the usual definition of the volume. Index $A$ will be omitted if function $A$ is understood. As in \cite{OmSt} we observe that the multiplicity at a point and the volume at a real valued function $A$ are additive in rectangles:
\begin{description}
  \item[(a)] $m_{R_1\sqcup R_2}(\mathbf{x})=m_{R_1}(\mathbf{x})+m_{ R_2}(\mathbf{x})$;
  \item[(b)] $V_A(R_1\sqcup R_2)=V_A(R_1)+V_A( R_2)$.
\end{description}
Also following \cite{OmSt} we define a function $L$ of $R\in \mathfrak{R}$, and functions $P_M$ and $P_O$ of $\mathbf{x} \in\RRR^2$, all depending also on the real valued functions $A$ and $B$ such that $A\leqslant B$:
\begin{equation}\label{eq:LPmPo}
\begin{split}
   L^{(A,B)}(R) & =\sum_{\substack{\mathbf{y}\in{\RRR\times\RRR}\\m_R(\mathbf{y})>0}} B(\mathbf{y})m_R(\mathbf{y}) + \sum_{\substack{\mathbf{y}\in{\RRR\times\RRR}\\ m_R(\mathbf{y})<0}}A(\mathbf{y})m_R(\mathbf{y}) \\
   P_M^{(A,B)}(\mathbf{x})  & =\inf_{\substack{R\in\mathfrak{R}\\ m_R(\mathbf{x})>0}}\frac{L^{(A,B)}(R)}{m_R(\mathbf{x})} \quad\mbox{and}\quad P_O^{(A,B)}(\mathbf{x})= \inf_{\substack{R\in \mathfrak{R}\\ m_R(\mathbf{x})<0}} \frac{L^{(A,B)}(R)}{-m_R(\mathbf{x})},
\end{split}
\end{equation}
where infimum of an empty set is assumed equal to $+\infty$.

It is our first next goal to prove that there exists a distribution $F$ between two quasi-distributions $A\leqslant B$ (all having the same marginal distributions $F_X(x)$ and $F_Y(y)$) if and only if function $L^{(A,B)}$ is positive on all rectangles. On the way to that result, Theorem \ref{thm:L_positive}, we will first assume two conditions on the pair of real valued functions $(A,B)$ to be later specialized to a pair of quasi-distributions:
\begin{description}
  \item[(Q1)] $A\leqslant B$, and
  \item[(Q2)] $L^{(A,B)}(R)\geqslant0$ for all $R\in\mathfrak{R}$.
\end{description}
Here is a key lemma where we also need function $\gamma^{(A,B)} (\mathbf{x})= \min\{P_O^{(A,B)}(\mathbf{x}), B(\mathbf{x})- A(\mathbf{x})\}$ defined for $\mathbf{x}\in\DD$.

\begin{lemma}\label{lem:key}
  Let the pair of real valued functions $A\leqslant B$ satisfy Conditions $\mathbf{(Q1)}$ and $\mathbf{(Q2)}$, and let there exist an $\mathbf{x} \in\DD$ such that $t_0=\gamma^{(A,B)}(\mathbf{x})>0$. Then the pair of real valued functions $(A',B)$, where
\[
    A'(\mathbf{y})=\left\{
                     \begin{array}{ll}
                       A(\mathbf{x})+t, & \hbox{if $\mathbf{y}= \mathbf{x}$;} \\
                       A(\mathbf{y}), & \hbox{otherwise;}
                     \end{array}
                   \right.
\]
satisfies conditions $\mathbf{(Q1)}, \mathbf{(Q2)}$ for any $t, 0<t\leqslant  t_0$. If we choose $t=t_0$, then $\gamma^{(A',B)} (\mathbf{x})=0$.
\end{lemma}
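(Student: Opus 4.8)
The plan is to verify Conditions $\mathbf{(Q1)}$ and $\mathbf{(Q2)}$ for the modified pair $(A',B)$ directly from the definitions, exploiting that $A'$ differs from $A$ only by raising the single value at $\mathbf{x}$ by an amount $t\le t_0=\gamma^{(A,B)}(\mathbf{x})=\min\{P_O^{(A,B)}(\mathbf{x}),B(\mathbf{x})-A(\mathbf{x})\}$. Condition $\mathbf{(Q1)}$, i.e.\ $A'\le B$, is immediate: at points $\mathbf{y}\ne\mathbf{x}$ nothing changes, and at $\mathbf{y}=\mathbf{x}$ we have $A'(\mathbf{x})=A(\mathbf{x})+t\le A(\mathbf{x})+(B(\mathbf{x})-A(\mathbf{x}))=B(\mathbf{x})$ since $t\le t_0\le B(\mathbf{x})-A(\mathbf{x})$.

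For Condition $\mathbf{(Q2)}$, fix $R\in\mathfrak{R}$ and compare $L^{(A',B)}(R)$ with $L^{(A,B)}(R)$. Since the two functions agree except at $\mathbf{x}$, and since $L^{(A,B)}(R)$ uses the lower function $A$ only at points $\mathbf{y}$ with $m_R(\mathbf{y})<0$, there are three cases according to the sign of $m_R(\mathbf{x})$. If $m_R(\mathbf{x})\ge0$, then $L^{(A',B)}(R)=L^{(A,B)}(R)\ge0$ because $A$ (hence the change) is not used at $\mathbf{x}$; here the first sum in \eqref{eq:LPmPo} involves $B$, which is unchanged. If $m_R(\mathbf{x})<0$, then the term $A(\mathbf{x})m_R(\mathbf{x})$ in $L^{(A,B)}(R)$ becomes $A'(\mathbf{x})m_R(\mathbf{x})=(A(\mathbf{x})+t)m_R(\mathbf{x})$, so that $L^{(A',B)}(R)=L^{(A,B)}(R)+t\,m_R(\mathbf{x})$, which is a decrease of magnitude $t\,|m_R(\mathbf{x})|$. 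Thus we must show $L^{(A,B)}(R)+t\,m_R(\mathbf{x})\ge0$, i.e.\ $L^{(A,B)}(R)/(-m_R(\mathbf{x}))\ge t$. But by the definition of $P_O^{(A,B)}(\mathbf{x})$ as the infimum of exactly these ratios over all $R$ with $m_R(\mathbf{x})<0$, we have $L^{(A,B)}(R)/(-m_R(\mathbf{x}))\ge P_O^{(A,B)}(\mathbf{x})\ge t_0\ge t$, which gives the claim. Hence $L^{(A',B)}(R)\ge0$ in all cases, establishing $\mathbf{(Q2)}$.

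Finally, take $t=t_0$ and show $\gamma^{(A',B)}(\mathbf{x})=\min\{P_O^{(A',B)}(\mathbf{x}),B(\mathbf{x})-A'(\mathbf{x})\}=0$. If $t_0=B(\mathbf{x})-A(\mathbf{x})$, then $B(\mathbf{x})-A'(\mathbf{x})=B(\mathbf{x})-A(\mathbf{x})-t_0=0$ and we are done. Otherwise $t_0=P_O^{(A,B)}(\mathbf{x})<B(\mathbf{x})-A(\mathbf{x})$, and it suffices to show $P_O^{(A',B)}(\mathbf{x})=0$. Pick a (near-)minimizing $R$ with $m_R(\mathbf{x})<0$ for the infimum defining $P_O^{(A,B)}(\mathbf{x})$; from the computation above, $L^{(A',B)}(R)=L^{(A,B)}(R)-t_0(-m_R(\mathbf{x}))$, so $L^{(A',B)}(R)/(-m_R(\mathbf{x}))=L^{(A,B)}(R)/(-m_R(\mathbf{x}))-t_0$, whose infimum over such $R$ is $P_O^{(A,B)}(\mathbf{x})-t_0=0$; combined with $L^{(A',B)}(R)\ge0$ (just proved) this forces $P_O^{(A',B)}(\mathbf{x})=0$.

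The main obstacle is purely bookkeeping: one must be careful that in the definition of $L^{(A,B)}(R)$ the lower function $A$ enters only through points with negative multiplicity, so that raising $A$ at $\mathbf{x}$ can only decrease $L$ when $m_R(\mathbf{x})<0$ and leaves it untouched otherwise — and then to recognize that the worst-case decrease is precisely controlled by $P_O^{(A,B)}(\mathbf{x})$. A minor subtlety is the infimum-versus-minimum issue in the last step: since $P_O^{(A,B)}(\mathbf{x})$ is an infimum, one argues with an approximating sequence $R_n$ and passes to the limit, using $L^{(A',B)}\ge0$ as the matching lower bound.
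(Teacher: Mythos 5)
Your proof is correct and is exactly the direct bookkeeping verification that the paper delegates to \cite[Proposition 13]{OmSt}: the increase at $\mathbf{x}$ affects $L^{(A,B)}(R)$ only when $m_R(\mathbf{x})<0$, where it subtracts $t(-m_R(\mathbf{x}))$, and this loss is controlled precisely by $P_O^{(A,B)}(\mathbf{x})\geqslant t_0\geqslant t$; the final claim follows since $P_O^{(A',B)}(\mathbf{x})=P_O^{(A,B)}(\mathbf{x})-t_0$ exactly (the infimum commutes with subtracting the constant $t_0$), so the approximating-sequence caveat you add at the end is not even needed.
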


The proof of this lemma goes exactly in the same way as the proof of \cite[Proposition 13]{OmSt} and will be omitted. Note that even if we started with quasi-distributions $(A,B)$ the function $A'$ would not be a quasi-distribution any more in general. Here is an extension of another result, i.e. \cite[Proposition 16]{OmSt}. The  original result is technically quite involved and goes through a number of stages. The extension of the proof is straightforward and will be omitted although it needs a careful reexamination.

\begin{proposition}\label{prop:L_positive}
%\begin{enumerate}[(a)]      \item Under the conditions $\mathbf{(Q1)},\mathbf{(Q2)}$ we have   \[       P_M^{(A,B)}(\mathbf{x})+ P_O^{(A,B)}(\mathbf{x}) \geqslant B(\mathbf{x})-A(\mathbf{x})   \]   for all $\mathbf{x}\in\RRR^2$.
  %\item If under the conditions $\mathbf{(Q1)},\mathbf{(Q2)}$ we have    \begin{equation*}%\label{eq:copula}      \min\{P_O^{(A,B)}(\mathbf{x}),B(\mathbf{x})-A(\mathbf{x})\}=0\quad \mbox{for all}\quad\mathbf{x}\in\RRR^2,    \end{equation*}    then $V_A(R)\geqslant0$ for all rectangles $R\subseteq\RRR^2$.      \item
  Let $A\leqslant B$ be discrete (bivariate) quasi-distributions with fixed marginal distributions. Then, there exists a discrete (bivariate) distribution $F$ with $A\leqslant F\leqslant B$ if and only if
\[
    L^{(A,B)}(R) \geqslant0
\]
for all $R\in \mathfrak{R}$.
%\end{enumerate}
\end{proposition}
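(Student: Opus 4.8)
The plan is to reduce Proposition~\ref{prop:L_positive} to the combination of Lemma~\ref{lem:key} and the classical existence of an extension, exactly mirroring the argument of \cite[Proposition 16]{OmSt} but keeping track of the fact that we now work on a mesh $\Delta\subseteq\RRR^2$ rather than on the unit square. The necessity direction is the easy one: if a discrete distribution $F$ with $A\leqslant F\leqslant B$ exists, then for any $R=\bigsqcup_i R_i\in\mathfrak{R}$ we have $V_F(R)=\sum_i V_F(R_i)\geqslant 0$ since $F$ is a distribution, and on the other hand $L^{(A,B)}(R)\geqslant V_F(R)$ because at points of positive multiplicity $B(\mathbf{y})\geqslant F(\mathbf{y})$ and at points of negative multiplicity $A(\mathbf{y})\leqslant F(\mathbf{y})$ (so that $A(\mathbf{y})m_R(\mathbf{y})\geqslant F(\mathbf{y})m_R(\mathbf{y})$ there). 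Hence $L^{(A,B)}(R)\geqslant 0$ for all $R$.

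For sufficiency, assume $L^{(A,B)}(R)\geqslant 0$ for all $R\in\mathfrak{R}$; this is exactly Conditions $\mathbf{(Q1)}$ and $\mathbf{(Q2)}$ on the pair $(A,B)$. The idea is to push $A$ upward toward $B$, point by point, until the lower function becomes a distribution. Concretely: enumerate the finitely many points of the mesh $\Delta$ on which $A<B$. As long as there is a point $\mathbf{x}\in\DD$ with $\gamma^{(A,B)}(\mathbf{x})=\min\{P_O^{(A,B)}(\mathbf{x}),\,B(\mathbf{x})-A(\mathbf{x})\}>0$, apply Lemma~\ref{lem:key} with $t=t_0=\gamma^{(A,B)}(\mathbf{x})$ to replace $A$ by $A'$; the lemma guarantees that $(A',B)$ still satisfies $\mathbf{(Q1)},\mathbf{(Q2)}$, and that afterwards $\gamma^{(A',B)}(\mathbf{x})=0$. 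One must check that this process terminates and leaves us with a pair $(F,B)$ satisfying $\mathbf{(Q1)},\mathbf{(Q2)}$ and $\gamma^{(F,B)}(\mathbf{x})=0$ for every $\mathbf{x}\in\DD$; then one argues, as in \cite{OmSt}, that $\gamma^{(F,B)}\equiv 0$ forces $F$ itself to be a distribution, i.e.\ $V_F(R)\geqslant 0$ for every rectangle $R$ with corners in $\DD$. Indeed $P_O^{(F,B)}(\mathbf{x})=0$ at the relevant opposite corners together with $F\leqslant B$ translates, via the definition of $L^{(F,B)}$ and $P_O^{(F,B)}$, into the nonnegativity of $V_F$ on each rectangle; and the marginal distributions are preserved throughout because modifying $A$ only at interior mesh points (where $B-A>0$, hence not on the boundary where $A=B=F_X$ or $F_Y$ or their complements) does not touch the values $F(x,\infty),F(\infty,y)$. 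Finally $A\leqslant F\leqslant B$ holds since each step only increased the lower function and never past $B$, and $F$ satisfies Condition (A) since both $A$ and $B$ do and $F$ is squeezed between them.

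The main obstacle — and the reason the proof in \cite{OmSt} ``goes through a number of stages'' — is the bookkeeping in the middle step: showing that the greedy ``raise $A$ at a point by $\gamma^{(A,B)}(\mathbf{x})$'' procedure, when iterated over all mesh points, actually drives $\gamma^{(\cdot,B)}$ to zero everywhere simultaneously rather than merely at the last point touched. Raising $A$ at one point can change $P_O^{(A,B)}$ at other points, so one cannot simply loop once through the points; one needs the finer analysis of how $L^{(A,B)}$ and the infima defining $P_M,P_O$ behave under these elementary moves, plus a monotonicity/compactness argument (the mesh is finite and each $A(\mathbf{x})$ increases but stays bounded by $B(\mathbf{x})$) to conclude convergence to a fixed point of the procedure. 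Since the mesh $\Delta$ is finite, all the sums defining $V_A$, $L^{(A,B)}$ are finite and the only genuinely new ingredient compared with the unit-square case is that some corners lie at $\pm\infty$; but $L^{(A,B)}$, $m_R$, and $\gamma^{(A,B)}$ are defined purely combinatorially in terms of the finitely many mesh points, so the passage from $\II^2$ to $\RRR^2$ requires only that Lemma~\ref{lem:key} (already stated in the present generality) and the termination argument be reread with unbounded corners allowed — which is exactly the ``careful reexamination'' the authors allude to. Hence the proof is that of \cite[Proposition 16]{OmSt}, verbatim up to this reexamination, and I would present it by citing that result and remarking that none of its steps used boundedness of the corners in an essential way.
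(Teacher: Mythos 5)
Your proposal is correct and follows essentially the same route as the paper, which itself omits the proof and refers to \cite[Proposition 16]{OmSt}: necessity via $L^{(A,B)}(R)\geqslant V_F(R)\geqslant 0$, and sufficiency by iteratively raising $A$ at mesh points using Lemma~\ref{lem:key} until $\gamma^{(\cdot,B)}$ vanishes everywhere, then invoking Lemma~\ref{lem:P O} to conclude the resulting function is a distribution --- exactly the scheme the authors spell out in their proof of Theorem~\ref{thm:sup inf}. Your termination worry resolves itself because, under $\mathbf{(Q2)}$, $P_O\geqslant 0$ always, so once $\gamma(\mathbf{x})=0$ it remains $0$ under later moves and a single pass over the finite mesh suffices; and no boundary point is ever modified since $B-A=0$ there by the fixed-margins assumption.
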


In this proposition and in what follows the term ``fixed marginal distributions'' or ``fixed margins'' for short will mean that all bivariate distributions or quasi-distributions under consideration have the same $F_X$ and the same $F_Y$, while these univariate distributions need not be equal in general.

The proof of this proposition and of Theorem \ref{thm:sup inf} relies heavily on the following result which we call here a lemma although it is technically quite elaborate. %Again, this result is a straightforward extension of the analogous result of \cite{OmSt} and will be omitted.
Recall the notation  $\gamma^{(A,B)} (\mathbf{x})= \min\{P_O^{(A,B)}(\mathbf{x}), B(\mathbf{x})- A(\mathbf{x})\}$ introduced just before Lemma \ref{lem:key}. Note that, again, the proof of the lemma is a straightforward extension of \cite[Theorem 15]{OmSt} and will be omitted.

\begin{lemma}\label{lem:P O}
  If under the conditions $\mathbf{(Q1)},\mathbf{(Q2)}$ we have $\gamma^{(A,B)} (\mathbf{x})=0$ for all $\mathbf{x}\in \RRR\times \RRR$, then $V_A(R)\geqslant0$ for all rectangles $R\subseteq \RRR\times \RRR$.
\end{lemma}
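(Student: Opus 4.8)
The plan is to argue by contradiction: suppose $\gamma^{(A,B)}(\mathbf{x}) = 0$ for all $\mathbf{x} \in \RRR\times\RRR$ yet there is a rectangle $R_0$ with $V_A(R_0) < 0$. The idea is to show that the hypothesis $\gamma^{(A,B)} \equiv 0$ forces every point of the opposite corner set of a ``bad'' rectangle to lie against an obstruction, and then to trade this obstruction for a genuine rectangle (or disjoint union of rectangles) $R \in \mathfrak{R}$ with $L^{(A,B)}(R) < 0$, contradicting \textbf{(Q2)}. Concretely, $\gamma^{(A,B)}(\mathbf{x}) = 0$ means that for every $\mathbf{x}$ either $A(\mathbf{x}) = B(\mathbf{x})$ or $P_O^{(A,B)}(\mathbf{x}) = 0$; in the latter case, by definition of $P_O^{(A,B)}$ as an infimum over $R$ with $m_R(\mathbf{x}) < 0$, there are elements of $\mathfrak{R}$ witnessing values of $L^{(A,B)}$ arbitrarily close to $0$ with $\mathbf{x}$ in their opposite corner set (and one must also handle the empty-infimum convention, where $P_O^{(A,B)}(\mathbf{x}) = +\infty$ and hence necessarily $A(\mathbf{x}) = B(\mathbf{x})$).

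The key steps, in order, would be: (1) unwind the definitions of $L^{(A,B)}$, $P_M^{(A,B)}$, $P_O^{(A,B)}$ and record the additivity properties (a), (b) of $m$ and $V$ over disjoint unions, together with the obvious inequality $L^{(A,B)}(R) \le V_A(R)$ is \emph{false} in general but $L^{(A,B)}(R) \le V_B(R)$-type comparisons that one actually needs; more precisely, note $V_A(R) \ge L^{(A,B)}(R) - \sum_{m_R(\mathbf{y})>0}(B(\mathbf{y})-A(\mathbf{y}))m_R(\mathbf{y})$, so the ``defect'' of $V_A$ relative to $L^{(A,B)}$ is controlled by the gaps $B - A$ precisely at main corners. (2) Given the putative bad rectangle $R_0 = [\mathbf{a},\mathbf{c}]$ with $V_A(R_0) < 0$, look at its opposite corners $\mathbf{b}, \mathbf{d}$: if $A = B$ at both of them and at $\mathbf{a}, \mathbf{c}$ the gaps vanish too, then $V_A(R_0) = V_B(R_0) = L^{(A,B)}(R_0) \ge 0$, a contradiction; so some corner carries a genuine gap, and at such a corner (necessarily an opposite corner after reducing to the worst case) we must have $P_O^{(A,B)} = 0$. (3) Pick, for small $\eps > 0$, a witness $R_\eps \in \mathfrak{R}$ with $m_{R_\eps}(\mathbf{b}) < 0$ (say) and $L^{(A,B)}(R_\eps)/(-m_{R_\eps}(\mathbf{b})) < \eps$, and form a suitable disjoint-union combination of $R_0$ with copies of $R_\eps$ (and, if needed, the analogous witnesses at $\mathbf{d}$) so that the offending opposite-corner contributions in the $V_A$-expansion get replaced by $B$-values at the cost of adding $L^{(A,B)}(R_\eps)$ terms; by additivity the resulting $R \in \mathfrak{R}$ satisfies $L^{(A,B)}(R) \le V_A(R_0) + C\eps$ for a constant $C$ depending only on the multiplicities, which is negative for $\eps$ small — contradicting \textbf{(Q2)}.

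The main obstacle I expect is bookkeeping in step (3): one has to cancel the opposite-corner terms of $R_0$ against the main-corner side of the witnesses while keeping track of every point's net multiplicity across the disjoint union, and one must ensure that the witnesses do not inadvertently spoil multiplicities at other boundary points — this is exactly the delicate combinatorial balancing that makes the corresponding argument in \cite[Theorem 15]{OmSt} technically heavy. The novelty here over \cite{OmSt} is only that the ambient domain is $\RRR\times\RRR$ rather than $\II^2$, so one must also verify that the witnesses $R_\eps$ and all the rectangles constructed have their vertices in $\DD$ and that the various infima behave correctly at the infinite coordinates; since the definitions of $m_R$, $V_A$ and $L^{(A,B)}$ were set up above to live on $\RRR^2$ with finite sums, this verification is routine, and the combinatorial core of the \cite{OmSt} proof transfers verbatim. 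Hence I would simply remark that the proof is the straightforward extension of \cite[Theorem 15]{OmSt} to rectangles in $\RRR\times\RRR$, carrying out the one genuinely new point — that the linear-rational bilinear interpolation of Proposition \ref{bilinear} and Corollary \ref{positive} preserves signs of volumes, so no sign is lost when passing between the mesh and its interpolation — which is already in hand.
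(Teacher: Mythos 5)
The paper itself offers no proof of this lemma---it is declared a straightforward extension of \cite[Theorem 15]{OmSt} and omitted---so your concluding move of simply deferring to that result is exactly what the authors do. The problem is that the sketch you give of the internal mechanics contains a genuine orientation error that would derail the argument if carried out as written. Your step (1) correctly records that for a single rectangle $R_0=[\mathbf{a},\mathbf{c}]$ one has $L^{(A,B)}(R_0)-V_A(R_0)=(B-A)(\mathbf{a})+(B-A)(\mathbf{c})$, i.e.\ the defect sits at the \emph{main} corners, where $L^{(A,B)}$ charges $B$-values while $V_A$ charges $A$-values; the opposite corners $\mathbf{b},\mathbf{d}$ enter both quantities with the same $-A$-values and need no repair. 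Yet in steps (2)--(3) you locate the ``genuine gap'' at an opposite corner of $R_0$ and pick witnesses $R_\eps$ with $m_{R_\eps}(\mathbf{b})<0$; you also speak of replacing opposite-corner contributions by $B$-values. That is backwards and achieves no cancellation: the points at which $L^{(A,B)}(R_0)\geqslant 0$ can fail to force $V_A(R_0)\geqslant 0$ are $\mathbf{a}$ and $\mathbf{c}$; it is there that $(B-A)>0$ together with $\gamma^{(A,B)}=\min\{P_O^{(A,B)},B-A\}=0$ forces $P_O^{(A,B)}=0$, and it is there that one must splice in witnesses $S$ with $m_S(\mathbf{a})<0$ so that the positive multiplicity of $\mathbf{a}$ coming from $R_0$ is cancelled and its $B(\mathbf{a})$-contribution effectively drops to $A(\mathbf{a})$.

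A second, smaller gap is the claim that ``by additivity'' the combined element satisfies $L^{(A,B)}(R)\leqslant V_A(R_0)+C\eps$ with $C$ depending only on multiplicities: the multiplicities of the witnesses are not bounded as $\eps\to0$, so one must normalise. Writing $k=-m_S(\mathbf{a})$, $k'=-m_{S'}(\mathbf{c})$ and $T=kk'R_0\sqcup k'S\sqcup kS'$, the pointwise subadditivity and positive homogeneity of $m\mapsto\max\{A(\mathbf{y})m,B(\mathbf{y})m\}$ yield
\begin{equation*}
L^{(A,B)}(T)\leqslant kk'L^{(A,B)}(R_0)-kk'\bigl((B-A)(\mathbf{a})+(B-A)(\mathbf{c})\bigr)+k'L^{(A,B)}(S)+kL^{(A,B)}(S')<kk'\bigl(V_A(R_0)+2\eps\bigr),
\end{equation*}
which is negative for $\eps<\tfrac12\abs{V_A(R_0)}$ and contradicts $\mathbf{(Q2)}$; the cancellation at $\mathbf{a}$ and $\mathbf{c}$ is exact by homogeneity, which is what renders the cross-multiplicities of $S$ at $\mathbf{c}$ and of $S'$ at $\mathbf{a}$ harmless. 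With the corners reoriented and this scaling in place your plan does go through, and the remaining passage from $\II^2$ to $\RRR\times\RRR$ is indeed only the routine verification you describe.
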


Here is one of our main results which is a nontrivial extension of \cite[Theorem 17]{OmSt} and definitely needs to be proven. As a matter of fact, in order to do so we first need an additional lemma:

\begin{lemma}\label{lem:A F B}
  Let $A$ be a quasi-distribution and $F$ a distribution on $\RRR\times\RRR$ having fixed marginal distributions $F_X$ and $F_Y$. Let $\Delta_n\subseteq\Delta_{n+1}$ be a sequence of meshes for $n\in \mathds{N}$ whose union of corners $\mathcal{U}$ satisfies
  \begin{enumerate}[(1)]
    \item $\mathcal{U}$ is dense in $\RRR\times\RRR$,
    \item the first coordinates of members of $\mathcal{U}$ contain all jumps of $F_X$,
    \item the second coordinates of members of $\mathcal{U}$ contain all jumps of $F_Y$.
  \end{enumerate}
  If $F|_{\Delta_n}\leqslant A|_{\Delta_n}$ respectively $F|_{\Delta_n}\geqslant A|_{\Delta_n}$ for all $n\in\mathds{N}$, then $F\leqslant A$ respectively $F\geqslant A$. %Then there exists a sequence $A_n$ of quasi-distributions converging to $A$ and such that $A_n|_{\Delta_n}=A|_{\Delta_n}$.
\end{lemma}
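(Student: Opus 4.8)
The plan is to prove the two statements simultaneously; by symmetry (replacing $(A,F)$ with $(-A,-F)$ would flip the inequality, but more honestly the two arguments are verbatim copies with inequalities reversed), it suffices to treat the case $F|_{\Delta_n}\leqslant A|_{\Delta_n}$ for all $n$ and conclude $F\leqslant A$ on $\RRR\times\RRR$. Fix an arbitrary point $\mathbf{z}=(x,y)\in\RRR\times\RRR$; we must show $F(\mathbf{z})\leqslant A(\mathbf{z})$. The idea is to approximate $\mathbf{z}$ from a suitable direction by corners in $\mathcal{U}$, using the density hypothesis (1), and then pass to the limit using one-sided continuity/monotonicity properties of $F$ and $A$. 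Since both $F$ and $A$ are increasing in each variable (quasi-distributions are increasing in each variable, as observed in Section \ref{sec:quasi}, and so are distributions), one-sided limits exist at every point along every axis direction; the role of hypotheses (2) and (3) is precisely to guarantee that we can choose the approximating corners so that these one-sided limits are the ones that actually reproduce $F(\mathbf z)$ and $A(\mathbf z)$ — i.e.\ we never get trapped inside a jump interval of a margin where the two functions could behave differently.

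The key steps, in order, are as follows. First, I would reduce to the four ``corner'' cases according to whether $x$ (resp.\ $y$) is or is not a right-continuity point of the relevant margin; equivalently, whether we should approach $x$ from the right or the left. Concretely: if $x$ is not a jump of $F_X$ and not a jump of $F_Y$-irrelevant — rather, decide for the first coordinate whether $F(\cdot,y)$ and $A(\cdot,y)$ are right-continuous at $x$ or whether $x$ sits at a left endpoint of a jump, and pick the approximation side accordingly. Second, having fixed the side, use density of $\mathcal{U}$ together with hypotheses (2)–(3): if $x$ is a jump of $F_X$ then by (2) the value $x$ itself is a first coordinate of a point of $\mathcal{U}$, so we may take the first coordinate of our approximating corners to be exactly $x$ eventually; if $x$ is not a jump of $F_X$, then $F(\cdot,y)$ (and likewise $A(\cdot,y)$, whose margin is also $F_X$) is continuous at $x$, so any sequence of first coordinates from $\mathcal{U}$ converging to $x$ works, and density supplies one. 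Do the same for the second coordinate. Third, for each $n$ large enough, the chosen approximating corner $\mathbf{z}_n$ lies in $\mathcal{U}$, hence in some $\Delta_{m}$, hence in every $\Delta_k$ with $k\geqslant m$ since the meshes are nested; therefore $F(\mathbf{z}_n)=F|_{\Delta_k}(\mathbf{z}_n)\leqslant A|_{\Delta_k}(\mathbf{z}_n)=A(\mathbf{z}_n)$. Fourth, let $n\to\infty$: the left side converges to $F(\mathbf{z})$ and the right side to $A(\mathbf{z})$ by the one-sided continuity just arranged (monotone convergence along each coordinate, using that a monotone function's one-sided limit at a point is approached along any monotone sequence from that side). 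Hence $F(\mathbf{z})\leqslant A(\mathbf{z})$, and since $\mathbf{z}$ was arbitrary, $F\leqslant A$.

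The main obstacle I anticipate is the bookkeeping in Step 1–2: making sure that a \emph{single} choice of approximation side for the first coordinate simultaneously reproduces $F(\mathbf z)$ \emph{and} $A(\mathbf z)$. This is exactly where it matters that $F$ and $A$ share the margin $F_X$: the set of $x$ at which $F(\cdot,y)$ fails to be right-continuous (for some $y$) is contained in the jump set of $F_X$, and similarly for $A$; so outside the jump set of $F_X$ both are continuous in $x$ and the side does not matter, while on the jump set hypothesis (2) lets us hit $x$ on the nose. A secondary subtlety is the ``double jump'' convention flagged earlier in the paper (a margin discontinuous on both sides): there one must be slightly careful that the corner in $\mathcal{U}$ with first coordinate $x$ records the correct one of the two one-sided values, but since $F$ and $A$ are only ever evaluated at genuine points of $\RRR\times\RRR$ and agree there with the limit from the chosen side by construction of the mesh values, this causes no real trouble. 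Everything else — the limit passage and the use of nestedness — is routine.
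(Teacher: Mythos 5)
Your proof is correct and follows essentially the same route as the paper's: approximate an arbitrary point by corners of the meshes (hitting any jump coordinate exactly via hypotheses (2)--(3), and using density of $\mathcal{U}$ otherwise), apply the discrete inequality at those corners, and pass to the limit. The only real difference is in how the continuity at non-jump coordinates is justified: the paper first invokes Theorem \ref{thm:sklar_quasi} to write $F=C(F_X,F_Y)$ and $A=Q(F_X,F_Y)$ and relies on the Lipschitz property of (quasi-)copulas, whereas you obtain the equivalent bound $0\leqslant F(x_2,y)-F(x_1,y)\leqslant F_X(x_2)-F_X(x_1)$ directly from the volume condition on boundary rectangles together with the shared margins --- a slightly more elementary but entirely adequate substitute.
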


\begin{proof} We will only treat the case ``$\leqslant$'' since the other one goes in a similar way. First apply Theorem \ref{thm:sklar_quasi} on distribution $F$ and on quasi-distribution $A$ to obtain a copula $C$ and a quasi-copula $Q$ such that $F= C(F_X,F_Y)$ and $A=Q(F_X,F_Y)$. Clearly, it suffices to show that $C\leqslant Q$. Now, if a point $(x,y)\in\RRR \times\RRR$ is such that $x$ is a jump of $F_X$ and $y$ is a jump of $F_Y$, then this point belongs to a $\Delta_n$ for a large enough $n$ and the claim follows immediately. Next, if $x$ has this property and $y$ does not, then $F_Y$ is continuous at $y$ and there is a sequence of $y_k$ such that $(x,y_k)$ converges to $(x,y)$ and each of its terms belongs to a $\Delta_n$ for a large enough index $n$.
So, the pursued condition is satisfied at each of the terms and consequently in the limit. Similar argument applies if the roles of $x$ and $y$ are exchanged. And finally, when the point $(x,y)$ is such that both functions $F_X$ and $F_Y$ are continuous, we need to find a double sequence to conclude what is desired. Actually, we are done now since the values of $F$ and $A$ do not depend on the definitions of $C$ and $Q$ at other points.
\end{proof}

\begin{theorem}\label{thm:L_positive}
  Let $A\leqslant B$ be (bivariate) quasi-distributions with fixed marginal distributions $F_X(x)$ and $F_Y(y)$. Then, there exists a (bivariate) distribution $F$ with $A\leqslant F\leqslant B$ if and only if
\[
    L^{(A,B)}(R) \geqslant0
\]
for all $R\in \mathfrak{R}$. In this case $F$ has necessarily the same marginal distributions $F_X(x)$ and $F_Y(y)$.
\end{theorem}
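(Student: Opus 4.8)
The plan is to reduce the general (continuous‑domain) statement to the discrete one already available in Proposition~\ref{prop:L_positive}, using the approximation machinery of Lemma~\ref{lem:A F B} together with a compactness argument. The ``only if'' direction is the easy half: if $F$ is a distribution with $A\leqslant F\leqslant B$, then for any $R=\bigsqcup_i R_i\in\mathfrak R$ we estimate $V_F(R)=\sum_i V_F(R_i)$ from below by replacing, corner by corner according to the sign of the multiplicity, the value $F(\mathbf y)$ by $B(\mathbf y)$ where $m_R(\mathbf y)>0$ and by $A(\mathbf y)$ where $m_R(\mathbf y)<0$; this only decreases the sum, so $0\leqslant V_F(R)\leqslant L^{(A,B)}(R)$. (One must also note that $F$ automatically has margins $F_X,F_Y$: since $A$ and $B$ share these margins and $A\leqslant F\leqslant B$, evaluating at $(x,\infty)$ and $(\infty,y)$ pins $F_X(x)\leqslant F(x,\infty)\leqslant F_X(x)$, etc.)

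For the ``if'' direction, assume $L^{(A,B)}(R)\geqslant 0$ for all $R\in\mathfrak R$. Fix an increasing sequence of meshes $\Delta_n\subseteq\Delta_{n+1}$ whose union of corners $\mathcal U$ is dense in $\RRR\times\RRR$ and contains all jumps of $F_X$ in its first coordinates and all jumps of $F_Y$ in its second coordinates (possible since, by Froda's theorem, there are only countably many jumps). Restricting to $\Delta_n$, the pair $A|_{\Delta_n}\leqslant B|_{\Delta_n}$ is a pair of discrete quasi‑distributions with fixed margins, and the hypothesis $L^{(A,B)}(R)\geqslant 0$ in particular holds for all $R\in\mathfrak R$ built from rectangles of $\Delta_n$; note here that $L^{(A,B)}$ computed from the restricted functions agrees with $L^{(A,B)}$ of the ambient functions on such $R$, since $L$ only reads off corner values. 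Hence Proposition~\ref{prop:L_positive} yields a discrete distribution $F_n$ on $\Delta_n$ with $A|_{\Delta_n}\leqslant F_n\leqslant B|_{\Delta_n}$. Passing to the bilinear interpolation $F_n^{\mathrm{BL}}$ keeps it a distribution (by the Proposition following Corollary~\ref{positive}), and one checks it still has the prescribed margins; by Corollary~\ref{prop:compact} the family $\mathfrak D^{F_X,F_Y}$ is compact in the uniform norm, so a subsequence of $(F_n^{\mathrm{BL}})$ converges uniformly to some distribution $F\in\mathfrak D^{F_X,F_Y}$.

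It remains to verify $A\leqslant F\leqslant B$. Here is where Lemma~\ref{lem:A F B} does the work: fix $m$; for every $n\geqslant m$ we have $F_n|_{\Delta_m}=F_n^{\mathrm{BL}}|_{\Delta_m}$ squeezed between $A|_{\Delta_m}$ and $B|_{\Delta_m}$ (using $\Delta_m\subseteq\Delta_n$ and the fact that $F_n$ already satisfies the inequalities on the larger mesh). Wait — more carefully, I would first replace the subsequence by a diagonal argument so that, for each fixed $m$, the tail of the sequence restricted to $\Delta_m$ is trapped between $A|_{\Delta_m}$ and $B|_{\Delta_m}$; taking the uniform limit this gives $A|_{\Delta_m}\leqslant F|_{\Delta_m}\leqslant B|_{\Delta_m}$ for every $m$. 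Now $F|_{\Delta_m}\leqslant B|_{\Delta_m}$ for all $m$ together with density of $\mathcal U$ and the jump conditions gives, by the ``$\leqslant$'' half of Lemma~\ref{lem:A F B} applied with the quasi‑distribution $B$, that $F\leqslant B$; symmetrically the ``$\geqslant$'' half with the quasi‑distribution $A$ gives $A\leqslant F$. The final clause on margins is then immediate from the squeeze, as in the ``only if'' part, or simply from $F\in\mathfrak D^{F_X,F_Y}$ by construction.

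I expect the main obstacle to be bookkeeping rather than conceptual: making sure the discrete objects produced by Proposition~\ref{prop:L_positive} on nested meshes can be organized (via a diagonal subsequence) so that the uniform limit simultaneously inherits the sandwiching on \emph{every} $\Delta_m$, and double‑checking that the restriction identity $L^{(A,B)}(R)=L^{(A|_{\Delta_n},\,B|_{\Delta_n})}(R)$ and the margin‑preservation under bilinear interpolation both genuinely hold — these are exactly the points where the passage between the discrete and general settings could hide an error, and where Lemmas~\ref{lem:A F B} and the interpolation results of Section~\ref{sec:quasi} must be invoked with care.
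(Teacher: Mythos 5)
Your overall strategy is the paper's: reduce to the discrete case via Proposition~\ref{prop:L_positive} on an increasing sequence of meshes satisfying the hypotheses of Lemma~\ref{lem:A F B}, interpolate, extract a uniformly convergent subsequence by compactness, and pass the sandwich to the limit with Lemma~\ref{lem:A F B}. Your ``only if'' direction is fine (indeed more direct than the paper's, which routes back through the discrete proposition), and your care about trapping the tail on each fixed $\Delta_m$ is correct --- in fact no diagonalization is needed, since $\Delta_m\subseteq\Delta_n$ already traps $F_n|_{\Delta_m}$ for every $n\geqslant m$.

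There is, however, one genuine gap, precisely at the point you flagged as ``one checks it still has the prescribed margins'': this check fails. The bilinear interpolation $F_n^{\mathrm{BL}}$ has as its first margin the function $x\mapsto F_n^{\mathrm{BL}}(x,\infty)$, which is the piecewise linear (resp.\ linear rational) interpolant of $F_X$ through the points of $\delta_{x^n}$; it agrees with $F_X$ only at the mesh points and differs from it elsewhere whenever $F_X$ is not itself of that interpolated form between consecutive mesh points (and it is continuous across the jumps of $F_X$, whereas $F_X$ is not). Hence $F_n^{\mathrm{BL}}\notin\mathfrak{D}^{F_X,F_Y}$ and Corollary~\ref{prop:compact} does not apply to your sequence; nor can you fall back on compactness of the set of \emph{all} distributions, which fails in the uniform norm once margins are allowed to vary. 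The repair is an extra re-coupling step: writing $\breve{F}_n=F_n^{\mathrm{BL}}$ with margins $\breve{F}_X^n,\breve{F}_Y^n$, apply Theorem~\ref{thm:sklar_quasi} to obtain a copula $C_n$ with $\breve{F}_n=C_n(\breve{F}_X^n,\breve{F}_Y^n)$, and replace $\breve{F}_n$ by $\widehat{F}_n=C_n(F_X,F_Y)\in\mathfrak{D}^{F_X,F_Y}$. Since $\breve{F}_X^n$ and $\breve{F}_Y^n$ agree with $F_X$ and $F_Y$ on the mesh, one still has $\widehat{F}_n|_{\Delta_n}=F_n$, so the sandwich $A|_{\Delta_n}\leqslant\widehat{F}_n|_{\Delta_n}\leqslant B|_{\Delta_n}$ survives, and now the compactness and limiting argument you describe goes through verbatim. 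With that substitution your proof is complete.
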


\begin{proof}
  We first assume that there exists a distribution $F$ with $A\leqslant F\leqslant B$ and choose an $R\in \mathfrak{R}$. Note that $R$ is made of a finite number of rectangles that have a finite union of all possible corners. So, there exists a mesh $\Delta$ containing all these corners. Now, observe that $\langle A\rangle=A|_\Delta$ and $\langle B\rangle=B|_\Delta$ are discrete quasi-distributions, that $\langle F\rangle= F|_\Delta$ is a discrete distribution, and that $\langle A\rangle\leqslant \langle F\rangle\leqslant\langle B\rangle$. So, the desired conclusion follows by Proposition \ref{prop:L_positive}.

To get the inverse implication, assume that the condition of the theorem is fulfilled for all $R\in\mathfrak{R}$. Choose a sequence of meshes $\Delta_n\subseteq\Delta_{n+1}$ for $n\in\mathds{N}$ satisfying the three conditions of Lemma \ref{lem:A F B}. First, one may choose, say, $\Delta_n= \delta_{x^n} \times \delta_{y^n}$ determined by points
\[
    \delta_{x^n}=\delta_{y^n}=\left\{\frac{k}{2^n}\right\}_{k=-n2^n}^{n2^n}
\]
for $n\in\mathds{N}$ to satisfy Condition \emph{(1)}. We know that a monotone function has at most a countable set of discontinuities denoted by $\{x_i\}_i$. Let the new set $\delta_{x^n}$ be the union of the set defined above and the first (no more than) $n$ elements of the set $\{x_i\}_i$ to ensure Condition \emph{(2)}. We take care of Condition \emph{(3)} in a similar way.

For an $n\in\mathds{N}$ let $\mathfrak{R}_n$ be the set of disjoint unions of rectangles with corners in $\Delta_n$. Then, $A|_{\Delta_n}$ and $B|_{\Delta_n}$ satisfy the assumptions of Proposition \ref{prop:L_positive}, so that there exists a discrete distribution $F_n$ on $\Delta_n$ such that
\[
    A|_{\Delta_n}\leqslant F_n\leqslant B|_{\Delta_n}
\]
Now, extend the discrete distribution $F_n$ to a general distribution $\breve{F}_n= (F_n)^{BL}$ and denote its marginal distributions by $\breve{F}_X^n$ and $\breve{F}_Y^n$. The quasi-distributions $A$ and $B$ have fixed marginal distributions, so that inequality above implies  that $\breve{F}_X^n |_{\delta{x^n}}=F_X|_{\delta{x^n}}$ and $\breve{F}_Y^n |_{\delta{y^n}}=F_Y|_{\delta{y^n}}$. A simple consideration establishes that $\breve{F}_X^n$ converges to $F_X$ and that $\breve{F}_Y^n$ converges to $F_Y$. Finally, let $C_n$ be the copula with $\breve{F}_n=C_n(\breve{F}_X^n,\breve{F}_Y^n)$ and define $\widehat{F}_n=C_n({F}_X,{F}_Y)$. It follows that
\begin{equation}\label{eq:A F B}
  A|_{\Delta_n}\leqslant F_n= \widehat{F}_n|_{\Delta_n} %={F}|_{\Delta_n}
  \leqslant B|_{\Delta_n}.
\end{equation}
Since the set of bivariate distributions having a fixed pair of marginal distributions is compact by Proposition \ref{prop:compact} there exists a subsequence $\widehat{F}_{n_k},k\in \mathds{N}$, converging uniformly to a distribution $F$. Clearly, $F|_{\Delta_n}=F_n$ for all $n\in\mathds{N}$ so that Lemma \ref{lem:A F B} concludes the proof of the theorem by Equation \eqref{eq:A F B}.
\end{proof}

In the following theorem we consider $A$ and $B$, a pair of bivariate quasi-distributions with fixed margins $F_X$ and $F_Y$, and denote
\[
    \mathcal{D}(A,B)=\{ F\in\mathfrak{D}^{F_X,F_Y}\,|\,A\leqslant F\leqslant B\}.
\]

\begin{theorem}\label{thm:sup inf}
  If $A\leqslant B$ are quasi-distributions with $\mathcal{D}(A,B) \neq\emptyset$, then\vskip3mm
  \begin{enumerate}[(a)]
  \item $\displaystyle
    B=\bigvee\mathcal{D}(A,B)\ \ \mbox{if and only if}\ \ B(\mathbf{x}) -A(\mathbf{x}) \leqslant P_O^{(A,B)}(\mathbf{x})$ for all $\mathbf{x}\in \RRR\times\RRR$.\\
  \item $\displaystyle
    A=\bigwedge\mathcal{D}(A,B)\ \ \mbox{if and only if}\ \ B(\mathbf{x})- A(\mathbf{x}) \leqslant P_M^{(A,B)}(\mathbf{x})$ for all $\mathbf{x}\in\RRR \times\RRR$.
  \end{enumerate}\vskip3mm
\end{theorem}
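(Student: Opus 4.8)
The plan is to prove part (a); part (b) follows by the symmetry $\mathbf{x}\mapsto\mathbf{x}$, $M(R)\leftrightarrow O(R)$, which swaps the roles of $A$ and $B$ (more precisely, one applies (a) to the pair $(-B^\sigma,-A^\sigma)$ obtained by the standard reflection that turns main corners into opposite corners). So I focus on the equivalence
\[
B=\bigvee\mathcal{D}(A,B)\quad\Longleftrightarrow\quad B(\mathbf{x})-A(\mathbf{x})\leqslant P_O^{(A,B)}(\mathbf{x})\ \text{ for all }\mathbf{x}.
\]

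For the direction ($\Leftarrow$): assume $B(\mathbf{x})-A(\mathbf{x})\leqslant P_O^{(A,B)}(\mathbf{x})$ for all $\mathbf{x}$, equivalently $\gamma^{(A,B)}(\mathbf{x})=0$ for all $\mathbf{x}\in\RRR\times\RRR$, where $\gamma$ is the function introduced before Lemma~\ref{lem:key}. By Lemma~\ref{lem:P O} this already gives $V_A(R)\geqslant 0$ on all rectangles, so $A$ is itself a distribution and lies in $\mathcal{D}(A,B)$; but that alone is not what we want. What we want is: for every $\mathbf{x}_0$ there is an $F\in\mathcal{D}(A,B)$ with $F(\mathbf{x}_0)=B(\mathbf{x}_0)$. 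To produce it, fix $\mathbf{x}_0$ and run the ``lifting'' construction: since the hypothesis holds at every point we cannot lift the value at $\mathbf{x}_0$ via Lemma~\ref{lem:key} (the quantity $t_0=\gamma^{(A,B)}(\mathbf{x}_0)$ is $0$), so instead I would work with the reversed pair and lift $A$ up toward $B$ at $\mathbf{x}_0$ only. Concretely, I want to show the pair $(A,B)$ with the single value $A(\mathbf{x}_0)$ replaced by $B(\mathbf{x}_0)$ still satisfies $\mathbf{(Q2)}$, i.e. $L^{(\widetilde A,B)}(R)\geqslant 0$ for all $R$. For $R$ with $m_R(\mathbf{x}_0)\le 0$ nothing changes in $L$; for $R$ with $m_R(\mathbf{x}_0)=k>0$ the change in $L$ is $k\bigl(B(\mathbf{x}_0)-A(\mathbf{x}_0)\bigr)$ added to a term that previously used $A(\mathbf{x}_0)$ with a negative multiplicity — wait, that is exactly the content of the inequality $B(\mathbf{x}_0)-A(\mathbf{x}_0)\le P_O^{(A,B)}(\mathbf{x}_0)=\inf_{m_R(\mathbf{x}_0)<0}L^{(A,B)}(R)/(-m_R(\mathbf{x}_0))$, which bounds how much we may raise $A(\mathbf{x}_0)$ before some $L^{(A,B)}(R)$ with $m_R(\mathbf{x}_0)<0$ goes negative. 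So $\widetilde A\le B$ (using $\widetilde A(\mathbf{x}_0)=B(\mathbf{x}_0)$) and $\mathbf{(Q1)},\mathbf{(Q2)}$ persist; then Theorem~\ref{thm:L_positive} yields a distribution $F$ with $\widetilde A\le F\le B$, whence $F(\mathbf{x}_0)=B(\mathbf{x}_0)$ and $F\in\mathcal{D}(A,B)$. Taking the supremum over all such $F$ (one for each $\mathbf{x}_0$) gives $\bigvee\mathcal{D}(A,B)\ge B$ pointwise; the reverse inequality is immediate since $B$ is an upper bound for $\mathcal{D}(A,B)$.

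For the direction ($\Rightarrow$): argue by contraposition. Suppose $B(\mathbf{x}_0)-A(\mathbf{x}_0)>P_O^{(A,B)}(\mathbf{x}_0)$ for some $\mathbf{x}_0$. Pick a witnessing $R_0\in\mathfrak{R}$ with $m_{R_0}(\mathbf{x}_0)<0$ and $L^{(A,B)}(R_0)/(-m_{R_0}(\mathbf{x}_0))<B(\mathbf{x}_0)-A(\mathbf{x}_0)$. I claim no $F\in\mathcal{D}(A,B)$ can have $F(\mathbf{x}_0)$ close to $B(\mathbf{x}_0)$: for any distribution $F$ with $A\le F\le B$, the computation $0\le V_F(R_0)=\sum_{\mathbf y}F(\mathbf y)m_{R_0}(\mathbf y)$, in which $F$ is estimated from above by $B$ at main corners and from below by $A$ at opposite corners except that at $\mathbf{x}_0$ (an opposite-type entry, multiplicity $m_{R_0}(\mathbf{x}_0)<0$) we keep $F(\mathbf{x}_0)$, gives $0\le V_F(R_0)\le L^{(A,B)}(R_0)+m_{R_0}(\mathbf{x}_0)\bigl(F(\mathbf{x}_0)-A(\mathbf{x}_0)\bigr)$, hence $F(\mathbf{x}_0)-A(\mathbf{x}_0)\le L^{(A,B)}(R_0)/(-m_{R_0}(\mathbf{x}_0))<B(\mathbf{x}_0)-A(\mathbf{x}_0)$. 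Thus $\sup_{F\in\mathcal{D}(A,B)}F(\mathbf{x}_0)$ is strictly below $B(\mathbf{x}_0)$, so $B\ne\bigvee\mathcal{D}(A,B)$. (One should double-check the decomposition of $V_F(R_0)$ into the contributions at main versus opposite corners when $R_0$ is a disjoint union and some vertices coincide across the pieces; the multiplicity bookkeeping of Section~\ref{sec:tools}, items (a) and (b), makes this routine.)

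The main obstacle is the ($\Leftarrow$) direction, specifically verifying that raising $A$ to the value $B(\mathbf{x}_0)$ at the single point $\mathbf{x}_0$ preserves $\mathbf{(Q2)}$ — this is where the precise form of the bound $B(\mathbf{x}_0)-A(\mathbf{x}_0)\le P_O^{(A,B)}(\mathbf{x}_0)$ is used and where one must be careful that the perturbation at a main-type occurrence of $\mathbf{x}_0$ in some $R$ (multiplicity $>0$) can only increase $L^{(A,B)}(R)$, while at an opposite-type occurrence the defining infimum is exactly what guarantees non-negativity is not destroyed. Once that is in place, Theorem~\ref{thm:L_positive} does the heavy lifting. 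A secondary technical point is handling the case where $A=B$ at $\mathbf{x}_0$, or where $P_O^{(A,B)}(\mathbf{x}_0)=+\infty$ (empty infimum), but in both situations the inequality is trivially satisfied and one simply takes $F$ to be any member of $\mathcal{D}(A,B)$, which is nonempty by hypothesis and, by Theorem~\ref{thm:L_positive}, automatically has the right margins.
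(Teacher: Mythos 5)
Your ($\Rightarrow$) direction is correct and is, up to taking the contrapositive, the same computation as the paper's: bound $V_F(R_0)$ above by $L^{(A,B)}(R_0)+m_{R_0}(\mathbf{x}_0)\bigl(F(\mathbf{x}_0)-A(\mathbf{x}_0)\bigr)$ and use $V_F(R_0)\geqslant 0$. Two side remarks on the ($\Leftarrow$) direction before the main issue: the hypothesis $B-A\leqslant P_O^{(A,B)}$ is \emph{not} equivalent to $\gamma^{(A,B)}\equiv 0$ (it gives $\gamma^{(A,B)}=B-A$, since $\gamma^{(A,B)}=\min\{P_O^{(A,B)},B-A\}$), so your appeal to Lemma \ref{lem:P O} to conclude that $A$ is a distribution is unfounded and in general false; and in the degenerate case $P_O^{(A,B)}(\mathbf{x}_0)=+\infty$ an arbitrary $F\in\mathcal{D}(A,B)$ does not satisfy $F(\mathbf{x}_0)=B(\mathbf{x}_0)$, so that case is not dispatched by "take any member" --- it is handled by the same lifting.

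The genuine gap is in the key step of ($\Leftarrow$). Your single-point lift $\widetilde A$ (with $\widetilde A(\mathbf{x}_0)=B(\mathbf{x}_0)$) does preserve $\mathbf{(Q1)}$ and $\mathbf{(Q2)}$ --- that is exactly Lemma \ref{lem:key} --- but $\widetilde A$ is \emph{not} a quasi-distribution: it need not even be increasing (there is no reason that $B(\mathbf{x}_0)\leqslant A(\mathbf{y})$ for $\mathbf{y}\geqslant\mathbf{x}_0$), a point the paper explicitly flags right after Lemma \ref{lem:key}. Theorem \ref{thm:L_positive} is stated only for pairs of quasi-distributions with fixed margins, so you cannot invoke it for the pair $(\widetilde A,B)$, and this invocation is the entire content of your construction of $F$ with $F(\mathbf{x}_0)=B(\mathbf{x}_0)$. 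The paper's repair is the actual technical work of the proof: restrict to a finite mesh $\Delta_n$ containing $\mathbf{x}_0$, iterate the one-point lift of Lemma \ref{lem:key} over the finitely many mesh points until $\gamma^{(A',B)}$ vanishes at every mesh point, conclude from Lemma \ref{lem:P O} that the fully lifted $A'$ is itself a discrete distribution $F_n$ with $A|_{\Delta_n}\leqslant F_n\leqslant B|_{\Delta_n}$ and $F_n(\mathbf{x}_0)=B(\mathbf{x}_0)$, and then pass to the limit along a refining sequence of meshes using bilinear extension, compactness (Corollary \ref{prop:compact}) and Lemma \ref{lem:A F B}. None of this discretize--iterate--pass-to-the-limit machinery appears in your proposal, and without it the step "then Theorem \ref{thm:L_positive} yields a distribution $F$ with $\widetilde A\leqslant F\leqslant B$" does not go through. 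Your sketch of (b) via a reflection exchanging main and opposite corners is the right idea and matches the paper, though the reflection should be the one on quasi-distributions ($\delta:x\mapsto -x$, $F^\delta(x,y)=F_Y(y)-F(-x,y)$), not the quasi-copula reflection $\sigma$.
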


\begin{proof}
  Let us start by the proof of \emph{(a)}. Clearly, the condition of Theorem \ref{thm:L_positive} is fulfilled. Assume first that condition $B(\mathbf{x})-A(\mathbf{x}) \leqslant P_O^{(A,B)} (\mathbf{x})$ holds for all points $\mathbf{x}\in \RRR\times\RRR$ and that $\gamma^{(A,B)} (\mathbf{x})=B(\mathbf{x})- A(\mathbf{x})$ is strictly positive at a certain point $\mathbf{x}_0\in \RRR\times\RRR$. Choose a mesh, say $\Delta_n$ containing this point and apply Lemma \ref{lem:key} to replace $A$ by $A'$ with $A'(\mathbf{x}_0)= B(\textbf{x}_0)$ and $A'=A$ at all other points of the mesh. Clearly, $A\leqslant A'\leqslant B$, $L^{(A',B)} \geqslant0$ and $\gamma^{(A',B)}(\mathbf{x}_0)=0$ by Lemma \ref{lem:key}. We can repeat this procedure as long as there is a point in the mesh with a positive value of $\gamma$. If there is no point of the kind left in the mesh, the last corrected discrete quasi-distribution $A'$ is actually a discrete distribution by Lemma \ref{lem:P O} to be denoted by $F_n$. Since the mesh is finite we are done in a finite number of steps. The so obtained discrete distribution has the properties $A|_{\Delta_n}\leqslant F_n\leqslant B|_{\Delta_n}$ and at the same time $F_n(\mathbf{x}_0)= B(\mathbf{x}_0)$.
  Following the ideas from the proof of Theorem \ref{thm:L_positive} we continue by a sequence of meshes each contained in the next one whose union of corners is dense in $\RRR\times\RRR$ and by an according sequence of discrete distributions $F_n$ extended to a sequence of general distributions $(F_n)^{\mathrm{BL}}$. By going to a subsequence, if necessary, we may achieve a uniformly convergent sequence by Corollary \ref{prop:compact} and a limit distribution $F$ such that $A\leqslant F\leqslant B$ and at the same time, $F(\mathbf{x}_0)= B(\mathbf{x}_0)$, thus proving one direction of \emph{(a)}.

To get the proof of \emph{(a)} in the other direction assume that $B=\bigvee\mathcal{D}(A,B)$, choose $\mathbf{x} \in\RRR\times\RRR$, $\varepsilon>0$, and $F\in\mathcal{D}(A,B)$ such that
\[
    F(\mathbf{x})>B(\mathbf{x})-\varepsilon.
\]
It is clear that $P_O^{(A,B)}(\mathbf{x})\geqslant P_O^{(A,F)}(\mathbf{x})$. We want to show that
\begin{equation}\label{eq:last}
  P_O^{(A,F)}(\mathbf{x})\geqslant F(\mathbf{x})-A(\mathbf{x}).
\end{equation}
This will imply $P_O^{(A,B)}(\mathbf{x})>B(\mathbf{x})-A(\mathbf{x}) -\varepsilon$ and the desired conclusion will follow by the fact that $\varepsilon$ can be chosen arbitrarily small. In the proof of \eqref{eq:last} we first recall that
\[
    P_O^{(A,F)}(\mathbf{x})= \inf_{\substack{R\in \mathfrak{R}\\ m_R(\mathbf{x})<0}} \frac{L^{(A,F)}(R)}{-m_R(\mathbf{x})},
\]
where
\[
    L^{(A,F)}(R)=\sum_{m_R(\mathbf{y})>0}F(\mathbf{y})m_R(\mathbf{y}) +\sum_{m_R(\mathbf{y})<0}A(\mathbf{y})m_R(\mathbf{y}).
\]
We add to and subtract from these sums the sum of $F(\mathbf{y}) m_R(\mathbf{y})$ over $\mathbf{y}\in{\RRR\times\RRR}$ with $m_R(\mathbf{y})<0$ to get
\[
     L^{(A,F)}(R)= V_F(R) + \sum_{m_R(\mathbf{y})<0} (F(\mathbf{y}) - A(\mathbf{y})) (-m_R(\mathbf{y})) \geqslant (F(\mathbf{x}) - A(\mathbf{x})) (-m_R(\mathbf{x}))
\]
because all the summands of the above sum are nonnegative and they also contain the summand with $\mathbf{y}=\mathbf{x}$. This implies Equation \eqref{eq:last} thus finishing the proof of \emph{(a)}.

The proof of \emph{(b)} follows by taking the reflection (cf.\ the remark at the end of this proof) on the case \emph{(a)} \emph{mutatis mutandis}, i.e.\ once the necessary changes have been made. In particular, we apply the reflection on Equations \eqref{eq:LPmPo} and note that a reflection is exchanging the order on the lattice of quasi-distributions and by appropriately extending \cite[Lemma 6]{OmSt} also the main and opposite role of the corners of rectangles. So, we first get
\[
    L^{(B^\delta,A^\delta)}(\delta(R))=L^{(A,B)}(R)
\]
and then
\[
    P_O^{(B^\delta,A^\delta)}(\delta(\mathbf{x}))=P_M^{(A,B)}(\mathbf{x})  %=\inf_{\substack{R\in\mathfrak{R}\\ m_R(\mathbf{x})>0}}\frac{L^{(A,B)}(R)}{m_R(\mathbf{x})}
    \quad\mbox{and}\quad P_M^{(B^\delta,A^\delta)}(\delta(\mathbf{x}))= P_O^{(A,B)}(\mathbf{x}). %=\inf_{\substack{R\in \mathfrak{R}\\ m_R(\mathbf{x})<0}} \frac{L^{(A,B)}(R)}{-m_R(\mathbf{x})}
\]
The reflected \emph{(a)} becomes
\[
    B^\delta=\bigwedge\mathcal{D}(B^\delta,A^\delta)\ \ \mbox{\emph{if and only if}}\ \ A^\delta(\delta(\mathbf{x})) -B^\delta(\delta(\mathbf{x})) \leqslant P_M^{(B^\delta,A^\delta)}(\delta(\mathbf{x}))\ \ \mbox{\emph{for all}}\ \  x\in{\RRR\times\RRR}
\]
which is exactly \emph{(b)}. So, we are done by the first part of the proof.
\end{proof}

\textbf{Remark.} Recall that for any quasi-copula $Q$ the reflection $\sigma: x \mapsto 1-x$ induces a \emph{reflected quasi-copula}
$$Q^\sigma(x,y)=y-Q(1-x,y).$$
Similarly, for any quasi-distribution $F$ the reflection $\delta: x \mapsto -x$ induces a \emph{reflected quasi-distribution} by
$$F^\delta(x,y)=F_Y(y)-F(-x,y).$$
Observe that the so obtained quasi-distribution has the same second margin $F_Y(y)$, while the first margin $F_X(x)$ changes into $1-F_X(-x)$. In particular, in the standard probability approach, it is not c\`{a}dl\`{a}g any more, it is c\`{a}gl\`{a}d.\footnote{C\`{a}dl\`{a}g is a colourful French acronym (continue \`{a} droite, limit\'{e}s \`{a} gauche) to describe functions of a real variable which may have discontinuities, but are right continuous at every point, with a limit point to the left of every point (cf., say, \cite[p.\ 90]{Davi}); exchange left and right in this definition to get the meaning of c\`{a}gl\`{a}d. Possible English translations are RCLL (``right continuous with left limits''), or corlol (``continuous on (the) right, limit on (the) left''). } This is a phenomenon observed when exchanging a distribution function into a survival function. Observe that the same technique applies to a distribution which is just a special case of a quasi-distribution. In the case of ``the other'' reflection $\delta : y \mapsto 1-y$ applied to a quasi-distribution or distribution, we follow similar considerations. Note that, in particular, \cite[Lemma 6]{OmSt} has an immediate extension to quasi-distributions for either of the two reflections.

\section{The imprecise versions of the Sklar's theorem}\label{sec:impr_sklar}

\begin{theorem}[Imprecise Sklar's Theorem for Fixed Margins]\label{thm:sklar_fix}
  For quasi-distributions $A\leqslant B$ with fixed marginal distributions $F_X$ and $F_Y$ the following conditions are equivalent:
  \begin{enumerate}
    \item $B-A\leqslant P_O^{(A,B)}$ and $B-A \leqslant P_M^{(A,B)}$.
    \item $A=\inf\{F\in \mathfrak{D}^{F_X,F_Y}\,|\,A\leqslant F\leqslant B\}$ and $B=\sup\{F\in \mathfrak{D}^{F_X,F_Y}\,|\,A\leqslant F\leqslant B\}$.
    \item There exist quasi-copulas $P\leqslant Q$ satisfying
        \begin{equation}\label{eq:thm sklar}
            A=P(F_X,F_Y)\quad\mbox{respectively}\quad B=Q(F_X,F_Y)
        \end{equation}
        and
        \begin{equation}\label{eq:coherent}
          P=\inf\{C\ \mbox{copula}\,|\,P\leqslant C\leqslant Q\}\quad\mbox{and}\quad Q=\sup\{C\ \mbox{copula}\,|\,P\leqslant C\leqslant Q\}.
        \end{equation}
  \end{enumerate}
  Moreover, if $P\leqslant Q$ are any quasi-copulas satisfying \eqref{eq:coherent} and $F_X$ and $F_Y$ are any marginal distributions, then quasi-distributions defined by \eqref{eq:thm sklar} satisfy any and therefore all equivalent conditions (1)--(3).
\end{theorem}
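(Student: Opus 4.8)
The plan is to establish the equivalences (1)$\Leftrightarrow$(2)$\Leftrightarrow$(3) by chaining together the machinery already developed, and then to observe that the final ``Moreover'' clause is essentially free once one realizes that conditions (1) and (3) involve only $P$, $Q$ (or equivalently $A$, $B$) and not the specific margins. First I would dispatch (1)$\Leftrightarrow$(2). Assuming $\mathcal{D}(A,B)\neq\emptyset$ — which needs to be checked first: from $B-A\leqslant P_O^{(A,B)}$ one has in particular $L^{(A,B)}(R)\geqslant 0$ on all $R\in\mathfrak{R}$ (this is how $P_O$ is built), so Theorem~\ref{thm:L_positive} gives a distribution $F$ with $A\leqslant F\leqslant B$, hence $\mathcal{D}(A,B)\neq\emptyset$ — I apply Theorem~\ref{thm:sup inf}(a) and (b) directly: $B=\bigvee\mathcal{D}(A,B)$ iff $B-A\leqslant P_O^{(A,B)}$, and $A=\bigwedge\mathcal{D}(A,B)$ iff $B-A\leqslant P_M^{(A,B)}$. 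Conversely, if (2) holds then $\mathcal{D}(A,B)\neq\emptyset$ is immediate and Theorem~\ref{thm:sup inf} again yields both inequalities of (1). Note that the sup/inf in (2) and the lattice sup/inf $\bigvee,\bigwedge$ agree here because $\mathfrak{D}^{F_X,F_Y}$ is compact (Corollary~\ref{prop:compact}) and the pointwise sup of a family of distributions with common margins is again such a distribution.

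Next I would handle (2)$\Leftrightarrow$(3). Given (2), Theorem~\ref{thm:sklar_quasi} applied to the quasi-distributions $A$ and $B$ produces quasi-copulas $P$ and $Q$ with $A=P(F_X,F_Y)$ and $B=Q(F_X,F_Y)$; since $A\leqslant B$ and the margins are the same, evaluating at points of $\overline{\mathrm{Ran}\,F_X}\times\overline{\mathrm{Ran}\,F_Y}$ forces $P\leqslant Q$ on that set, and by the bilinear/linear interpolation used in the construction of $P$ and $Q$ the inequality propagates to all of $\II^2$ (a linear or bilinear function nonnegative at the corners of each interpolation cell is nonnegative throughout — this is exactly the sign-preservation recorded in Corollary~\ref{positive}). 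The correspondence $F\leftrightarrow C$, $F=C(F_X,F_Y)$, between $\mathfrak{D}^{F_X,F_Y}$ and the copulas $C$ with $P\leqslant C\leqslant Q$ is a bijection that preserves the pointwise order (one direction is Theorem~\ref{thm:sklar_quasi}; the other is composition with $F_X\times F_Y$, and order is preserved because $F_X,F_Y$ are nondecreasing and the interpolation steps preserve order). Hence taking inf/sup on one side corresponds to taking inf/sup on the other, and \eqref{eq:coherent} is equivalent to the sup/inf statement in (2). The reverse implication (3)$\Rightarrow$(2) is the same argument read backwards, using that $P(F_X,F_Y)$ and $Q(F_X,F_Y)$ are quasi-distributions with margins $F_X,F_Y$ (again by the interpolation properties, since $P(\cdot,1)=\mathrm{id}$ forces the first margin of $P(F_X,F_Y)$ to be $F_X$, and likewise for the second).

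Finally, the ``Moreover'' clause: suppose $P\leqslant Q$ are quasi-copulas satisfying \eqref{eq:coherent} and $F_X,F_Y$ are arbitrary univariate distributions (of standard or extended type). Define $A,B$ by \eqref{eq:thm sklar}. By the interpolation properties these are quasi-distributions (property (B)) with margins $F_X,F_Y$, and $A\leqslant B$ as above. The copulas $C$ with $P\leqslant C\leqslant Q$ correspond bijectively and order-isomorphically to the distributions $F\in\mathfrak{D}^{F_X,F_Y}$ with $A\leqslant F\leqslant B$, exactly as in the previous paragraph; so \eqref{eq:coherent} translates into condition (2), and hence all of (1)--(3) hold. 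The point worth stressing is that the translation between the copula world on $\II^2$ and the distribution world on $\RRR\times\RRR$ is completely insensitive to which margins one plugs in, which is precisely the ``extraordinariness depends only on the margins'' slogan from the introduction.

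The main obstacle I anticipate is the order-isomorphism claim between $\{C\text{ copula}: P\leqslant C\leqslant Q\}$ and $\mathcal{D}(A,B)$, specifically the verification that taking pointwise infima/suprema commutes with the correspondence $C\mapsto C(F_X,F_Y)$ in the presence of jumps of $F_X$ and $F_Y$. On ranges of $F_X$ and $F_Y$ the commutation is transparent, but at the jump intervals $J(x_i)$, $J(y_j)$ the value of a copula $C$ is determined by linear/bilinear interpolation from its corner values, and one must check that the pointwise sup of a family of copulas, restricted to such a cell, is still the interpolation of its own corner values — equivalently that sup commutes with the interpolation operator on each cell. This is true because linear (and bilinear) interpolation of a bounded monotone family is order-continuous in the appropriate sense, but it needs the careful case analysis of bounded versus unbounded ends from Proposition~\ref{bilinear} and Corollary~\ref{positive}; everything else is bookkeeping with the results already in hand.
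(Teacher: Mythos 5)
Your proof follows essentially the same route as the paper's: the equivalence $(1)\Leftrightarrow(2)$ via Theorem~\ref{thm:sup inf} after observing that either condition forces $L^{(A,B)}\geqslant 0$ (hence $\mathcal{D}(A,B)\neq\emptyset$ by Theorem~\ref{thm:L_positive}), and $(2)\Leftrightarrow(3)$ together with the ``Moreover'' clause via Theorem~\ref{thm:sklar_quasi} and the order-preserving correspondence $C\mapsto C(F_X,F_Y)$, the only cosmetic difference being that the paper obtains $P,Q$ as the pointwise infimum and supremum of the relevant copula family rather than from Sklar applied to $A$ and $B$ directly. One wording caveat: that correspondence is a surjection of $\set{C~\mbox{copula}~|~P\leqslant C\leqslant Q}$ onto $\mathcal{D}(A,B)$ rather than a bijection (and not onto all of $\mathfrak{D}^{F_X,F_Y}$) when the margins are not surjective, but surjectivity plus order-preservation is all your argument actually uses, and the interpolation subtlety you flag on the jump cells is present --- and equally unelaborated --- in the paper's own proof.
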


\begin{proof} Observe that either Condition \emph{(1)} or \emph{(2)} implies automatically that $L^{(A,B)}\geqslant0$.
  Equivalence of \emph{(1)} and \emph{(2)} is then clearly given by Theorem \ref{thm:sup inf}. %Assume now either \emph{(1)} or \emph{(2)}, so that both are true.
  Let $\mathcal{C}$ be the set of copulas $C$ such that for $F=C(F_X,F_Y)$ we have $A\leqslant F \leqslant B$ and define
  \[
    P=\inf\{C\in\mathcal{C}\}\quad\mbox{and}\quad Q=\sup\{C\in \mathcal{C}\},
  \]
  so that Equation %\eqref{eq:thm sklar} and
  \eqref{eq:coherent} is satisfied as soon as $\mathcal{C}$ is nonempty. Now, if \emph{(2)} holds, than this set is nonempty by Theorem \ref{thm:sklar_quasi} and if \emph{(3)} holds it is nonempty by definition. Since for any point $\mathbf{x}\in\RRR \times\RRR$ there is a point $\mathbf{u}\in\II^2$ such that $F(\mathbf{x})=C(\mathbf{u})$, the pointwise infimum respectively supremum of the set given in \emph{(2)} is attained at a quasi-distribution $A$ respectively $B$ if and only if the pointwise infimum respectively supremum of the set $\mathcal{C}$ is attained at quasi-copula $P$ respectively $Q$ given by Relation \eqref{eq:thm sklar}. This proves the equivalence of \emph{(2)} and \emph{(3)}. It is now a simple exercise to show the second half of the theorem.
\end{proof}

Let $\mathcal{F}$ be a nonempty set of bivariate distributions and define the sets of its marginal distributions by
$\mathcal{F}_{X}=\set{F_X ~|~ F \in \mathcal{F}}$ and $\mathcal{F}_Y =\set{F_Y ~|~ F \in \mathcal{F}}$. In addition, let
$$
\mathcal{F}_{X,Y}=\set{(F_X,F_Y) ~|~ F \in \mathcal{F}} \subseteq \mathcal{F}_X \times \mathcal{F}_Y,
$$
where the elements of this set, i.e., pairs of possible marginal distributions will be denoted by $F_{X,Y}=(F_X,F_Y)$.
For every $F_{X,Y}\in\mathcal{F}_{X,Y}$ define
$\mathcal{F}^{F_{X,Y}}=\mathcal{F}\cap\mathfrak{D}^{F_X,F_Y}$ and note that each $\mathcal{F}^{F_{X,Y}}$ is a nonempty set of distributions with fixed margins $F_X$ and $F_Y$, and that
$$
\mathcal{F}=\bigcup_{F_{X,Y}\in\mathcal{F}_{X,Y}
}\mathcal{F}^{F_{X,Y}}.
$$
Furthermore, denote
$$
\underline{\mathcal{F}}^{{F_{X,Y}}}=\inf_{ F \in \mathcal{F}^{F_{X,Y}} } F %\inf_{F_{X,Y}\in\mathcal{F}_{X,Y}} F_{X,Y}
\quad \textup{and} \quad \overline{\mathcal{F}}^{F_{X,Y}}=\sup_{ F \in \mathcal{F}^{F_{X,Y}} } F%\sup_{F \in \mathcal{F}^{F_{X,Y}}} F_{X,Y}
.
$$

\begin{lemma}
  The set $\mathfrak{Q}^{F_X,F_Y}$ is closed under pointwise infima and suprema for any fixed pair $F_{X,Y}\in \mathcal{F}_{X,Y}$, so that $\underline{\mathcal{F}}^{{F_{X,Y}}}$ and $\overline{\mathcal{F}}^{F_{X,Y}}$ are quasi-distributions.
\end{lemma}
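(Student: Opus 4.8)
The plan is to obtain the statement about $\underline{\mathcal{F}}^{F_{X,Y}}$ and $\overline{\mathcal{F}}^{F_{X,Y}}$ as an immediate consequence of the first assertion. Since $\mathcal{F}^{F_{X,Y}}$ is, as already observed, a nonempty subset of $\mathfrak{D}^{F_X,F_Y}\subseteq\mathfrak{Q}^{F_X,F_Y}$, once we know that $\mathfrak{Q}^{F_X,F_Y}$ is closed under arbitrary pointwise infima and suprema it follows that $\underline{\mathcal{F}}^{F_{X,Y}}=\inf\mathcal{F}^{F_{X,Y}}$ and $\overline{\mathcal{F}}^{F_{X,Y}}=\sup\mathcal{F}^{F_{X,Y}}$ belong to $\mathfrak{Q}^{F_X,F_Y}$, hence are quasi-distributions. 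So everything reduces to the closedness of $\mathfrak{Q}^{F_X,F_Y}$.

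Accordingly, I would take an arbitrary nonempty family $\{F_\alpha\}_{\alpha\in I}\subseteq\mathfrak{Q}^{F_X,F_Y}$ and set $\overline{F}=\sup_\alpha F_\alpha$ and $\underline{F}=\inf_\alpha F_\alpha$, pointwise. The easy part comes first: the values of $\overline{F}$ and $\underline{F}$ lie in $[0,1]$, and since the quantities $F_\alpha(x,-\infty)$, $F_\alpha(-\infty,y)$, $F_\alpha(\infty,\infty)$, $F_\alpha(x,\infty)$, $F_\alpha(\infty,y)$ equal $0,0,1,F_X(x),F_Y(y)$ independently of $\alpha$, the same identities persist for $\overline{F}$ and $\underline{F}$; hence both satisfy Condition (A) and have margins $F_X$ and $F_Y$. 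It remains to verify Condition (B), i.e.\ $V_{\overline{F}}(R)\geqslant0$ and $V_{\underline{F}}(R)\geqslant0$ for every rectangle $R$ meeting the boundary of $\DD$. Writing the standard corners as $\mathbf{a}=(s,t)$, $\mathbf{b}=(s',t)$, $\mathbf{c}=(s',t')$, $\mathbf{d}=(s,t')$ with $s\leqslant s'$, $t\leqslant t'$ and at least one of $s,t,s',t'$ infinite, I would run through the finitely many configurations of which coordinates are $\pm\infty$. In each of them the values of $\overline{F}$ (resp.\ $\underline{F}$) at the two boundary corners are forced to lie among $0$, $1$, $F_X(\cdot)$, $F_Y(\cdot)$, so that $V_{\overline{F}}(R)$ collapses to one of: (i) a single value $\overline{F}(\mathbf{x})\geqslant0$; (ii) an increment of a margin minus an increment of $\overline{F}$ at two comparable points, nonnegative because $\overline{F}$ is increasing, a property inherited pointwise from the $F_\alpha$; (iii) an expression of the form $\bigl(F_X(s')-F_X(s)\bigr)-\bigl(\overline{F}(s',t)-\overline{F}(s,t)\bigr)$, its counterpart with $F_Y$, or the Fr\'echet-type expression $\overline{F}(s,t)+1-F_X(s)-F_Y(t)$.

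Only case (iii) requires an argument, and this is the only real obstacle. The point is that the bound contributed on the right in each (iii) is $\alpha$-free: applying Condition (B) to $F_\alpha$ on the corresponding rectangle gives $F_\alpha(s',t)\leqslant F_\alpha(s,t)+\bigl(F_X(s')-F_X(s)\bigr)\leqslant\overline{F}(s,t)+\bigl(F_X(s')-F_X(s)\bigr)$ for every $\alpha$, and taking the supremum over $\alpha$ on the left yields $\overline{F}(s',t)-\overline{F}(s,t)\leqslant F_X(s')-F_X(s)$; likewise $F_\alpha(s,t)\geqslant F_X(s)+F_Y(t)-1$ for any single $\alpha$ gives $\overline{F}(s,t)\geqslant F_X(s)+F_Y(t)-1$. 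For $\underline{F}$ one argues symmetrically, now bounding an infimum from below: from $\underline{F}(s',t)\leqslant F_\alpha(s',t)\leqslant F_\alpha(s,t)+\bigl(F_X(s')-F_X(s)\bigr)$ for every $\alpha$ one takes the infimum over $\alpha$ on the right to get $\underline{F}(s',t)-\underline{F}(s,t)\leqslant F_X(s')-F_X(s)$. This establishes Condition (B) for $\overline{F}$ and $\underline{F}$, hence $\overline{F},\underline{F}\in\mathfrak{Q}^{F_X,F_Y}$, completing the proof. I would note that this is the exact analogue on $\DD$ of the classical fact \cite[Theorem 6.2.5]{Nels} that a pointwise sup or inf of quasi-copulas is a quasi-copula; one could alternatively deduce it from that result via Theorem \ref{thm:sklar_quasi} by writing $F_\alpha=C_\alpha(F_X,F_Y)$ and $\overline{F}=\bigl(\sup_\alpha C_\alpha\bigr)(F_X,F_Y)$, provided one first checks the routine converse direction that $Q(F_X,F_Y)$ is a quasi-distribution whenever $Q$ is a quasi-copula.
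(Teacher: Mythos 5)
Your proof is correct. For the main claim --- that $\mathfrak{Q}^{F_X,F_Y}$ is closed under pointwise infima and suprema --- you actually carry out the ``simple adjustment of the proof of \cite[Theorem 6.2.5]{Nels}'' that the paper merely invokes: Condition (A) and the margins pass to $\sup$ and $\inf$ trivially, and your key observation for Condition (B) is the right one, namely that a rectangle meeting the boundary of $\DD$ has at most two corners whose values are not forced by the margins, so its volume inequality reduces either to monotonicity (preserved pointwise by $\sup$ and $\inf$) or to a one-sided Lipschitz/Fr\'echet bound whose right-hand side is $\alpha$-free, through which the supremum (resp.\ infimum) can be passed; this is exactly why (B) survives the lattice operations while the full Condition (C) would not. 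You differ from the paper only in the second, easy half: you obtain that $\underline{\mathcal{F}}^{F_{X,Y}}$ and $\overline{\mathcal{F}}^{F_{X,Y}}$ are quasi-distributions immediately from the inclusion $\mathcal{F}^{F_{X,Y}}\subseteq\mathfrak{D}^{F_X,F_Y}\subseteq\mathfrak{Q}^{F_X,F_Y}$, whereas the paper routes this through Theorem \ref{thm:sklar_quasi}, writing each $F\in\mathcal{F}^{F_{X,Y}}$ as $C(F_X,F_Y)$, forming $\underline{\mathcal{C}}$ and $\overline{\mathcal{C}}$, and applying \cite[Theorem 6.2.5]{Nels} to the resulting quasi-copulas --- precisely the alternative you sketch in your closing remark. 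Your shortcut suffices for the statement as given; the paper's detour additionally exhibits the representing quasi-copulas, which is what is actually used later in Theorem \ref{thm:sklar1}, but it does tacitly rely on the routine converse that $Q(F_X,F_Y)$ is a quasi-distribution whenever $Q$ is a quasi-copula, a point you rightly flag as needing a (straightforward) check.
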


\textbf{Remark. } Although we prove here that the infimum and the supremum of a set of distributions with fixed margins are always quasi-distributions, it turns out that
$$
\underline{\mathcal{F}}=\inf_{F_{X,Y} \in \mathcal{F}_{X,Y}} \underline{\mathcal{F}}^{F_{X,Y}} \quad \textup{and} \quad \overline{\mathcal{F}}= \sup_{F_{X,Y} \in \mathcal{F}_{X,Y}} \overline{\mathcal{F}}^{F_{X,Y}}
$$
need not be quasi-distributions as shown in \cite[Example 1]{PeViMoMi2} and may even not be representable in the sense of Sklar's theorem as shown in \cite[Example 3.5]{PeViMoMi1} (cf.\ also \cite{MoMiPeVi}).

\begin{proof}[Proof of the lemma]
  The first claim of the lemma follows by a simple adjustment of the proof of  \cite[Theorem 6.2.5]{Nels}.
  Using Theorem \ref{thm:sklar_quasi} we find a set of copulas $\mathcal{C}^{F_{X,Y}}=\{C\,|\,F=C(F_X,F_Y)\ \mbox{for some}\ F\in \mathcal{F}^{F_{X,Y}}\}$ and let
$$
\underline{\mathcal{C}}=\inf_{C \in \mathcal{C}^{F_{X,Y}}} C \quad \textup{and} \quad \overline{\mathcal{C}}=\sup_{C \in \mathcal{C}^{F_{X,Y}}} C.
$$
The lemma follows by \cite[Theorem 6.2.5]{Nels} and the obvious fact that $\underline{\mathcal{F}}^{F_{X,Y}} = \underline{\mathcal{C}}(F_X,F_Y)$ and $\overline{\mathcal{F}}^{F_{X,Y}} = \overline{\mathcal{C}} (F_X,F_Y)$, so that they are quasi-distributions.
\end{proof}
\medskip

Following \cite{MoMiPeVi} (cf.\ also \cite{DiSaPlMeKl,OmSk}) we call a pair $(P, Q)$ of functions on $\DD$ an \emph{imprecise copula} if \textbf{(A)} they are grounded, \textbf{(B)} each of them has 1 as a neutral element, and
\begin{description}
  \item[(IC1)]  $P(\mathbf{a})+Q(\mathbf{c})-P(\mathbf{b})- P(\mathbf{d})\geqslant0$;
  \item[(IC2)] $Q(\mathbf{a})+P(\mathbf{c})-P(\mathbf{b})- P(\mathbf{d})\geqslant0$;
  \item[(IC3)] $Q(\mathbf{a})+Q(\mathbf{c})-Q(\mathbf{b})- P(\mathbf{d})\geqslant0$;
  \item[(IC4)] $Q(\mathbf{a})+Q(\mathbf{c})-P(\mathbf{b})- Q(\mathbf{d})\geqslant0$
\end{description}
for each rectangle $R\subseteq\DD$ defined by corners $\mathbf{a}, \mathbf{b}, \mathbf{c},$ and $\mathbf{d}$ in the standard way. It is known (cf.\ \cite{MoMiPeVi,DiSaPlMeKl,OmSk} for the general case and \cite{OmSt} for the discrete case) that for an imprecise copula $(P,Q)$ we have $P\leqslant Q$ and $P$ and $Q$ are quasi-copulas. The question whether every imprecise copula $(P,Q)$ satisfies Condition \eqref{eq:coherent} was proposed in \cite{MoMiPeVi} and answered in the negative in \cite{OmSt}. So, if an imprecise copula $(P,Q)$ does satisfy Condition \eqref{eq:coherent} we will call it a \emph{coherent imprecise copula} (cf.\ also \cite{OmSk}). The necessary and sufficient conditions for an imprecise copula to be coherent are also given in \cite[Theorem 19]{OmSt}. On the other hand, if a pair of quasi-copulas with $P\leqslant Q$ satisfies Condition \eqref{eq:coherent}, it is automatically an imprecise copula (cf.\ \cite{MoMiPeVi,DiSaPlMeKl,OmSt,OmSk}), so that we can take this condition as the primary definition of a coherent imprecise copula.

In what follows we need the notation
\[
    \mathcal{C}(P,Q)=\{C~|~C~\mbox{copula}~P\leqslant C\leqslant Q\}
\]
for any two quasi-copulas $P,Q$ with $P\leqslant Q$.

\medskip
\begin{theorem}[General Imprecise Sklar's Theorem]\label{thm:sklar1}~
\begin{enumerate}[(i)]
\item Let $\mathcal{F}$ be a nonempty set of bivariate distributions. Then there exists a family $\mathcal{C}^\mathcal{F}=\set{(P_{{F_{X,Y}}},Q_{{F_{X,Y}}}) ~|~ {{F_{X,Y}}\in {\mathcal{F}_{X,Y}}}}$ of coherent imprecise copulas with
$$
\underline{\mathcal{F}}^{{F_{X,Y}}}=P_{{F_{X,Y}}}(F_X, F_Y) \quad \textup{and} \quad \overline{\mathcal{F}}^{{F_{X,Y}}} =Q_{{F_{X,Y}}}(F_X, F_Y)
$$
for every ${{F_{X,Y}}\in {\mathcal{F}_{X,Y}}}$.
\item Let $\mathcal{F}_X$ and $\mathcal{F}_Y$ be two nonempty sets of univariate distributions and let $\mathcal{F}_{{X,Y}} \subseteq \mathcal{F}_X \times \mathcal{F}_Y$ be nonempty. Furthermore, let $\mathcal{C}=\{(P_{{F_{X,Y}}}, Q_{{F_{X,Y}}})\}_{{F_{X,Y}}}$ be a family of coherent imprecise copulas. Then
    $$
    \mathcal{F}=\set{C(F_X,F_Y) ~|~ {(F_X,F_Y)}\in \mathcal{F}_{{X,Y}}, ~C \in \mathcal{C}(P_{{F_{X,Y}}}, Q_{{F_{X,Y}}})}
    $$
is a nonempty set of bivariate distributions.
\end{enumerate}
\end{theorem}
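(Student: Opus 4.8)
The plan is to treat the two directions separately, in each case reducing to the fixed–margin theory of Section~\ref{sec:tools} and reusing the construction carried out inside the proof of the Lemma preceding this theorem. \emph{For part (i),} fix $F_{X,Y}=(F_X,F_Y)\in\mathcal{F}_{X,Y}$; the set $\mathcal{F}^{F_{X,Y}}$ is a nonempty family of distributions with common margins $F_X,F_Y$, so by Theorem~\ref{thm:sklar_quasi} every $F\in\mathcal{F}^{F_{X,Y}}$ may be written $F=C_F(F_X,F_Y)$ for a copula $C_F$; set $\mathcal{C}^{F_{X,Y}}=\{C_F\}_{F\in\mathcal{F}^{F_{X,Y}}}$ and
\[
P_{F_{X,Y}}=\inf\{C\}_{C\in\mathcal{C}^{F_{X,Y}}},\qquad Q_{F_{X,Y}}=\sup\{C\}_{C\in\mathcal{C}^{F_{X,Y}}}.
\]
By \cite[Theorem~6.2.5]{Nels} these are quasi-copulas with $P_{F_{X,Y}}\leqslant Q_{F_{X,Y}}$, and, exactly as in the proof of the Lemma, $P_{F_{X,Y}}(F_X,F_Y)=\underline{\mathcal{F}}^{F_{X,Y}}$ and $Q_{F_{X,Y}}(F_X,F_Y)=\overline{\mathcal{F}}^{F_{X,Y}}$. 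What remains is to show that $(P_{F_{X,Y}},Q_{F_{X,Y}})$ is a \emph{coherent} imprecise copula, i.e.\ that it satisfies Condition~\eqref{eq:coherent}; here I would use a squeeze: since $\mathcal{C}^{F_{X,Y}}\subseteq\mathcal{C}(P_{F_{X,Y}},Q_{F_{X,Y}})$, the pointwise infimum of the larger class is $\leqslant P_{F_{X,Y}}$, while every copula in $\mathcal{C}(P_{F_{X,Y}},Q_{F_{X,Y}})$ is $\geqslant P_{F_{X,Y}}$, so that infimum is in fact equal to $P_{F_{X,Y}}$, and dually the supremum equals $Q_{F_{X,Y}}$. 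By the discussion following the definition of an imprecise copula, $(P_{F_{X,Y}},Q_{F_{X,Y}})$ is then coherent, and ranging over $F_{X,Y}\in\mathcal{F}_{X,Y}$ yields the family $\mathcal{C}^\mathcal{F}$.

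\emph{For part (ii),} I would first verify that each element of the displayed set is genuinely a bivariate distribution. For $(F_X,F_Y)\in\mathcal{F}_{X,Y}$ and $C\in\mathcal{C}(P_{F_{X,Y}},Q_{F_{X,Y}})$, the function $C(F_X,F_Y)$ is grounded and equals $1$ at $(\infty,\infty)$ because $C$ is grounded with neutral element $1$; and for any rectangle $R$ one has $V_{C(F_X,F_Y)}(R)=V_C(R')$, where $R'$ is the image of $R$ under $F_X\times F_Y$, a (possibly degenerate) rectangle with standard corners in $\II^2$ since $F_X$ and $F_Y$ are increasing, and $V_C(R')\geqslant0$ because $C$ is $2$-increasing. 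No continuity of $F_X$ or $F_Y$ is used, so this covers margins of extended type as well. Nonemptiness is then immediate: $\mathcal{F}_{X,Y}\neq\emptyset$, and for a chosen $(F_X,F_Y)$ in it the coherence of $(P_{F_{X,Y}},Q_{F_{X,Y}})$ forces $\mathcal{C}(P_{F_{X,Y}},Q_{F_{X,Y}})\neq\emptyset$ (an empty sandwich class would make the infimum in \eqref{eq:coherent} equal to $+\infty$, contradicting $P_{F_{X,Y}}\leqslant1$), so picking a copula $C$ there gives $C(F_X,F_Y)\in\mathcal{F}$.

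The one genuinely delicate step is the coherence verification in part (i): one must be sure that replacing $\mathcal{C}^{F_{X,Y}}$ by the full sandwich class $\mathcal{C}(P_{F_{X,Y}},Q_{F_{X,Y}})$ alters neither the pointwise infimum nor the pointwise supremum. This is precisely the squeeze above, and it hinges only on $\mathcal{C}^{F_{X,Y}}$ being nonempty. Everything else — the Sklar passages through Theorem~\ref{thm:sklar_quasi} and \cite[Theorem~6.2.5]{Nels}, the routine check that a copula composed with univariate distributions is a bivariate distribution, and the extraction of nonemptiness of $\mathcal{C}(P,Q)$ from coherence — is straightforward.
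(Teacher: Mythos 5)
Your proof is correct, and for part (i) it takes a genuinely more self-contained route than the paper. The paper's proof of (i) consists of a single appeal to Theorem \ref{thm:sklar_fix}: it implicitly relies on the pair $(\underline{\mathcal{F}}^{F_{X,Y}},\overline{\mathcal{F}}^{F_{X,Y}})$ satisfying condition (2) of that theorem, a verification it leaves unstated and which is precisely your squeeze, carried out one level up on the distributions rather than on the copulas. You instead bypass Theorem \ref{thm:sklar_fix} altogether (and with it the $P_O^{(A,B)}$, $P_M^{(A,B)}$ machinery of Theorem \ref{thm:sup inf}): you build $P_{F_{X,Y}}$ and $Q_{F_{X,Y}}$ directly as the pointwise infimum and supremum of a family of representing copulas supplied by Theorem \ref{thm:sklar_quasi}, and then observe that the inf/sup of \emph{any} nonempty set of copulas automatically satisfies \eqref{eq:coherent} by the squeeze $\mathcal{C}^{F_{X,Y}}\subseteq\mathcal{C}(P_{F_{X,Y}},Q_{F_{X,Y}})$, together with the paper's remark that a pair of quasi-copulas satisfying \eqref{eq:coherent} is an imprecise copula. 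What the paper's route buys is uniformity — the same Theorem \ref{thm:sklar_fix} also yields the equivalence with the defect condition (1), which is of independent interest — while your route buys economy and makes explicit the one step the paper glosses over. Part (ii) matches the paper's argument; your derivation of the nonemptiness of $\mathcal{C}(P,Q)$ from \eqref{eq:coherent} (where the paper just says ``nonempty by definition'') and your explicit check that $C(F_X,F_Y)$ is a bivariate distribution, valid without any continuity assumption on the margins, are both sound.
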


\begin{proof}
$(i)$ For every ${{F_{X,Y}}\in \mathcal{F}_{X,Y}}$ the set $\mathcal{F}^{F_{X,Y}}$ is a set of distributions with fixed marginal distributions $F_X$ and $F_Y$. By Theorem \ref{thm:sklar_fix} there exists a coherent imprecise copula $(P_{F_{X,Y}},Q_{F_{X,Y}})$ such that
$$
\underline{\mathcal{F}}^{{F_{X,Y}}}(x,y)=P_{F_{X,Y}} (F_X(x), F_Y(y)) \quad \textup{and} \quad \overline{\mathcal{F}}^{{F_{X,Y}}} (x,y)=Q_{F_{X,Y}}(F_X(x),F_Y(y)).
$$
We let $\mathcal{C}=\set{(P_{F_{X,Y}},Q_{F_{X,Y}}) ~|~ {F_{X,Y}} \in \mathcal{F}_{X,Y}}$ to complete the proof.

$(ii)$ The fact that $\mathcal{F}$ is a set of bivariate distributions is clear because every $C$ is a copula and every pair $(F_X,F_Y)$ is a pair of margins.
For every coherent imprecise copula $(P,Q)$ the set of copulas $\mathcal{C}(P,Q)$ is nonempty by definition.
\end{proof}

\begin{example}
Let $F_1$ and $F_2$ be univariate distributions defined by
$$F_1(x)=\left\{\begin{array}{ll}
0, & x\leqslant \frac{1}{2},\\
2x-1, & \frac{1}{2}<x\leqslant 1,\\
1, & x>1,
\end{array}\right.
\quad \textup{and} \quad
F_2(x)=\left\{\begin{array}{ll}
0, & x\leqslant 0,\\
2x, & 0<x\leqslant \frac{1}{2},\\
1, & x>\frac{1}{2};
\end{array}\right.$$
and let $F$, $G$ and $H$ be bivariate distributions defined by
\begin{align*}
F(x,y) &=M(F_1(x),F_1(y)), \\
G(x,y) &=W(F_2(x),F_2(y)), \\
H(x,y) &=M(F_2(x),F_2(y)),
\end{align*}
where $M$ and $W$ are the upper and lower Fr\'{e}chet-Hoeffding bounds for copulas. Observe that $F(x,y)=0$ whenever either $x\leqslant \frac{1}{2}$ or $y\leqslant \frac{1}{2}$, and $G(x,y)= 1$ whenever both $x>\frac{1}{2}$ and $y>\frac{1}{2}$.
It follows that $F \leqslant G \leqslant H$. Hence
$$F=\inf\set{F,H} \quad \textup{and} \quad H=\sup\set{F,H}.$$
Both $F$ and $G$ are represented by the same copula $M$, while the (bivariate) %distributions of the
$p$-box $(G,H)$, %also belong to the
and hence also the $p$-box $(F,H)$, %. Observe that the latter
contains distributions $C(F_2(x),F_2(y))$ for any copula $C$. Consequently, a bivariate $p$-box in the sense of \cite{PeViMoMi2}, i.e., an interval of  bivariate distributions, cannot be represented by just one imprecise copula, i.e., an interval of copulas, unless having fixed margins (even if the lower and upper bounds are assumed to be quasi-distributions).
\end{example}

A bivariate $p$-box may be viewed as an extension of the notion of univariate $p$-box, where it usually means a set of all distribution functions $F$ lying pointwise between a smallest one $\underline{F}$ and a largest one $\overline{F}$, i.e., $\underline{F}(x)\leqslant F(x)\leqslant \overline{F}(x)$ for all $x\in\RRR$.
In view of the evidence presented in this paper and especially in this section so far we propose to study a slightly different notion to the (bivariate) $p$-box proposed in \cite{PeViMoMi2}. We believe that a more decisive notion is what we call a \emph{restricted (bivariate) $p$-box}; it is restricted in two ways: (1) it is made of quasi-distributions, and (2) the marginal univariate distributions of the whole set are two fixed (possibly distinct) univariate distributions $F_X$ and $F_Y$, a property we have been calling throughout the paper ``quasi-distributions with fixed margins''. Now, if $(A,B)$ is a restricted (bivariate) $p$-box, we call it \emph{coherent} if it has property \emph{(ii)} of Theorem \ref{thm:sklar_fix}. Observe that this condition is equivalent to having property \emph{(i)} of the same theorem. According to that theorem the coherent restricted (bivariate) $p$-boxes are in one-to-one correspondence with coherent imprecise copulas. Theorem \ref{thm:sklar1} may be seen as an extension of Theorem \ref{thm:sklar_fix} for families of coherent $p$-boxes. Using the notions introduced in this paragraph we can slightly reformulate Theorem \ref{thm:sklar1}.

%{\bf Question:} Assume $G \leqslant H$ are bivariate distributions (possiby with fixed margins), is there a way to find the set of copulas respectively quasi-copulas such that they represent in the sense of Theorem \ref{thm:sklar_quasi} all possible distributions respectively quasi-distributions lying between $G$ and $H$?

\medskip
\begin{theorem}[Imprecise Sklar's theorem -- $p$-box approach]\label{thm:sklar2}~
\begin{enumerate}[(i)]
\item Let $\mathcal{F}$ be a nonempty set of bivariate distributions such that $\mathcal{F}^{F_{X,Y}}$ is a coherent restricted bivariate p-box for all $F_{X,Y}\in \mathcal{F}_{X,Y}$.
Then there exists a family $\mathcal{C}=\set{(P_{F_{X,Y}},Q_{F_{X,Y}}) ~|~ F_{X,Y}\in \mathcal{F}_{X,Y}}$ of coherent imprecise copulas such that
$$\mathcal{F}=\set{C(F_X(x),F_Y(y)) ~|~ C \in \mathcal{C} (P_{F_{X,Y}},Q_{F_{X,Y}}), F_{X,Y}\in \mathcal{F}_{X,Y}}.$$
\item Let $\mathcal{F}_X$ and $\mathcal{F}_Y$ be two univariate p-boxes and let $\mathcal{F}_{X,Y} \subseteq \mathcal{F}_1 \times \mathcal{F}_2$ be a nonempty subset. Furthermore, let $\mathcal{C}=\set{(P_{F_{X,Y}},Q_{F_{X,Y}}) ~|~ F_{X,Y}\in \mathcal{F}_{X,Y}}$ be a family of coherent imprecise copulas. Then
$$\mathcal{F}=\set{C(F_X(x),F_Y(y)) ~|~ C \in \mathcal{C}(P_{F_{X,Y}},Q_{F_{X,Y}}), F_{X,Y}\in \mathcal{F}_{X,Y}}$$
is a nonempty set of bivariate distributions such that $\mathcal{F}^{F_{X,Y}}$ is a coherent restricted bivariate $p$-box for all ${F_{X,Y}} \in \mathcal{F}_{X,Y}$.
\end{enumerate}
\end{theorem}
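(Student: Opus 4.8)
The plan is to deduce Theorem \ref{thm:sklar2} from Theorem \ref{thm:sklar1} together with the definitions introduced in the paragraph preceding the statement, essentially by bookkeeping the correspondence "coherent restricted $p$-box $\leftrightarrow$ coherent imprecise copula" supplied by Theorem \ref{thm:sklar_fix}. For part \emph{(i)}, I would start from a set $\mathcal{F}$ of bivariate distributions with the property that each slice $\mathcal{F}^{F_{X,Y}}$ is a coherent restricted bivariate $p$-box. By definition of a restricted $p$-box, this means $\mathcal{F}^{F_{X,Y}} = \mathcal{D}(\underline{\mathcal{F}}^{F_{X,Y}}, \overline{\mathcal{F}}^{F_{X,Y}})$ for quasi-distributions $\underline{\mathcal{F}}^{F_{X,Y}} \leqslant \overline{\mathcal{F}}^{F_{X,Y}}$ (these are quasi-distributions by the Lemma preceding the definition), and coherence says precisely that condition \emph{(2)} of Theorem \ref{thm:sklar_fix} holds for the pair $A = \underline{\mathcal{F}}^{F_{X,Y}}$, $B = \overline{\mathcal{F}}^{F_{X,Y}}$. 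Invoking the equivalence \emph{(2)}$\Leftrightarrow$\emph{(3)} of that theorem produces, for each $F_{X,Y}$, a coherent imprecise copula $(P_{F_{X,Y}}, Q_{F_{X,Y}})$ with $\underline{\mathcal{F}}^{F_{X,Y}} = P_{F_{X,Y}}(F_X, F_Y)$ and $\overline{\mathcal{F}}^{F_{X,Y}} = Q_{F_{X,Y}}(F_X, F_Y)$. Setting $\mathcal{C} = \{(P_{F_{X,Y}}, Q_{F_{X,Y}}) \mid F_{X,Y} \in \mathcal{F}_{X,Y}\}$, the only thing left is the set identity.

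For the set identity in \emph{(i)}, I would argue both inclusions. If $F \in \mathcal{F}$, then $F \in \mathcal{F}^{F_{X,Y}}$ for $F_{X,Y} = (F_X, F_Y)$, so $\underline{\mathcal{F}}^{F_{X,Y}} \leqslant F \leqslant \overline{\mathcal{F}}^{F_{X,Y}}$; applying Theorem \ref{thm:sklar_quasi} to the distribution $F$ gives a copula $C$ with $F = C(F_X, F_Y)$, and the sandwich translates (using that every value of $F$ is a value of $C$ at a point of $\mathbb{I}^2$, exactly as in the proof of Theorem \ref{thm:sklar_fix}) into $P_{F_{X,Y}} \leqslant C \leqslant Q_{F_{X,Y}}$, i.e.\ $C \in \mathcal{C}(P_{F_{X,Y}}, Q_{F_{X,Y}})$. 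Conversely, if $C \in \mathcal{C}(P_{F_{X,Y}}, Q_{F_{X,Y}})$ for some $F_{X,Y} \in \mathcal{F}_{X,Y}$, then $C(F_X, F_Y)$ is a distribution by Theorem \ref{thm:sklar_quasi}, it lies between $\underline{\mathcal{F}}^{F_{X,Y}}$ and $\overline{\mathcal{F}}^{F_{X,Y}}$, and by coherence of the restricted $p$-box (condition \emph{(2)} being equivalent to \emph{(3)}, and the last sentence of Theorem \ref{thm:sklar_fix}) it belongs to $\mathcal{F}^{F_{X,Y}} \subseteq \mathcal{F}$. This gives $\mathcal{F} = \{C(F_X(x), F_Y(y)) \mid C \in \mathcal{C}(P_{F_{X,Y}}, Q_{F_{X,Y}}), F_{X,Y} \in \mathcal{F}_{X,Y}\}$.

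For part \emph{(ii)}, I would run essentially the same correspondence in the reverse direction. Given univariate $p$-boxes $\mathcal{F}_X, \mathcal{F}_Y$, a nonempty $\mathcal{F}_{X,Y} \subseteq \mathcal{F}_X \times \mathcal{F}_Y$, and a family of coherent imprecise copulas $(P_{F_{X,Y}}, Q_{F_{X,Y}})$, define $\mathcal{F}$ by the displayed formula. That $\mathcal{F}$ is a nonempty set of bivariate distributions is immediate: each $\mathcal{C}(P_{F_{X,Y}}, Q_{F_{X,Y}})$ is nonempty by the definition of coherence, and each $C(F_X, F_Y)$ is a distribution by Theorem \ref{thm:sklar_quasi}. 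The substantive point is that for each fixed $F_{X,Y}$ the slice $\mathcal{F}^{F_{X,Y}} = \mathcal{F} \cap \mathfrak{D}^{F_X,F_Y}$ equals $\{C(F_X,F_Y) \mid C \in \mathcal{C}(P_{F_{X,Y}}, Q_{F_{X,Y}})\}$ — here I must be slightly careful, since in principle a distribution produced with one pair of margins could coincide with one produced with another pair, but distributions determine their own margins via $F(x,\infty) = F_X(x)$, $F(\infty,y) = F_Y(y)$, so the slices are exactly the per-$F_{X,Y}$ pieces. Then the last sentence of Theorem \ref{thm:sklar_fix} (applied to $P = P_{F_{X,Y}}$, $Q = Q_{F_{X,Y}}$ with margins $F_X, F_Y$) says the quasi-distributions $A = P_{F_{X,Y}}(F_X,F_Y)$, $B = Q_{F_{X,Y}}(F_X,F_Y)$ satisfy condition \emph{(2)}, which is precisely coherence of the restricted bivariate $p$-box $\mathcal{F}^{F_{X,Y}} = \mathcal{D}(A,B)$.

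The step I expect to be the main (minor) obstacle is not any deep argument but the careful verification that the slice $\mathcal{F}^{F_{X,Y}}$ really coincides with $\mathcal{D}(\underline{\mathcal{F}}^{F_{X,Y}}, \overline{\mathcal{F}}^{F_{X,Y}})$ — i.e.\ that passing from copulas sandwiched between $P_{F_{X,Y}}$ and $Q_{F_{X,Y}}$ to distributions sandwiched between $P_{F_{X,Y}}(F_X,F_Y)$ and $Q_{F_{X,Y}}(F_X,F_Y)$ is a genuine bijection of sandwiched families. This is exactly the equivalence of conditions \emph{(2)} and \emph{(3)} in Theorem \ref{thm:sklar_fix}, established there via the observation that every value $F(\mathbf{x})$ is realized as $C(\mathbf{u})$ for some $\mathbf{u} \in \mathbb{I}^2$; I would simply cite that reasoning rather than repeat it. Everything else is definition-chasing.
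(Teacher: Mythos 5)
Your proposal is correct and follows essentially the same route as the paper: both parts reduce to Theorem~\ref{thm:sklar_fix} applied slice-by-slice to the sets $\mathcal{F}^{F_{X,Y}}$, with the set identity coming from the correspondence between copulas sandwiched by $(P_{F_{X,Y}},Q_{F_{X,Y}})$ and distributions sandwiched by the associated quasi-distributions. The paper's own proof is just a terser citation of Theorem~\ref{thm:sklar_fix}; your version merely spells out the two inclusions and the (correct) observation that a bivariate distribution determines its own margins, so the slices do not interfere.
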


\begin{proof}
$(i)$ By Theorem \ref{thm:sklar_fix} for every $F_{X,Y}\in \mathcal{F}_{X,Y}$, there exists a coherent imprecise copula $(P_{F_{X,Y}},Q_{F_{X,Y}})$ such that
$$\mathcal{F}^{F_{X,Y}}=\set{C(F_X(x),F_Y(y)) ~|~ C \in \mathcal{C}(P_{F_{X,Y}},Q_{F_{X,Y}})}.$$
Taking $\mathcal{C}=\set{(P_{F_{X,Y}},Q_{F_{X,Y}}) ~|~ F_{X,Y}\in \mathcal{F}_{X,Y}}$ completes the proof.

$(ii)$ $\mathcal{F}$ is clearly a nonempty set of bivariate distributions. For every $F_{X,Y}\in \mathcal{F}_{X,Y}$ the set $\mathcal{F}^{F_{X,Y}}=\set{C(F_X(x),F_Y(y)) ~|~ C \in \mathcal{C}(P_{F_{X,Y}},Q_{F_{X,Y}})}$ is a coherent restricted bivariate p-box by Theorem~\ref{thm:sklar_fix}.
\end{proof}

% ----------------------------------------------------------------

\bibliographystyle{amsplain}

\end{document}